\newcommand{\RR}{{\mathbb R}}
\newcommand{\NN}{{\mathbb N}}
\newcommand{\ZZ}{\mathbb Z}
\def\A{{\mathcal A}}
\def\D{{\mathcal D}}
\def\F{{\mathcal F}}
\def\R{{\mathcal R}}
\def\diam{{\rm diam}}
\def\vol{{\rm vol}}
\numberwithin{equation}{section}
\newtheorem{theo}{Theorem}
\newtheorem{prop}[theo]{Proposition}
\newtheorem{lemma}[theo]{Lemma}
\newtheorem{remark}[theo]{Remark}
\begin{document}

\parindent = 0cm

\title
{Linearly repetitive Delone systems have a finite number of non
periodic Delone system  factors.}

\author{Mar\'{\i}a Isabel Cortez}
\address{Departamento de Matem\'atica y CC. de la Universidad de Santiago de Chile,
Av. Libertador Bernardo O'Higgins 3363.} \email{mcortez@usach.cl }

\author{Fabien Durand}
\address{Laboratoire Ami\'enois
de Math\'ematique Fondamentale et Appliqu\'ee, CNRS-UMR 6140,
Universit\'{e} de Picardie Jules Verne, 33 rue Saint Leu, 80039
Amiens Cedex, France.} \email{fabien.durand@u-picardie.fr}
\email{samuel.petite@u-picardie.fr}
\author{Samuel Petite}

\subjclass{37B50, } \keywords{Delone sets, tiling systems, factor
maps, linearly repetitive, Vorono\"{\i} cell.}

\begin{abstract}
We prove  linearly repetitive Delone systems have finitely many
Delone system factors up to conjugacy. This result is also
applicable to linearly repetitive tiling systems.
\end{abstract}

\maketitle \markboth{Maria Isabel Cortez, Fabien Durand, Samuel
Petite}{ }

\section{Introduction}

The concepts of tiling dynamical  system and Delone dynamical
system are extensions to $\RR^d$-actions of the notion of subshift
(see \cite{Ro}). Classical examples  are those generated by
self-similar tilings, as the Penrose one, which have been
extensively studied since the 90's.  For details and references
see for example \cite{Ro,So1}. Systems arising from self-similar
tilings are known to be linearly repetitive, this means there
exists a positive constant $L$, such that every pattern of
diameter $D$ appears in every ball of radius $LD$ in any tiling of
the system. This concept has been first defined in \cite{LP}.
Linearly repetitive tiling and Delone systems can be seen as a
generalization to $\RR^d$-actions of the notion of linearly
recurrent subshift introduced in \cite{DHS}.

We study  the factor maps between Delone systems. The main result
is the following: linearly repetitive Delone systems have finitely
many Delone system factors up to conjugacy. As noticed in
\cite{So3}, tiling systems are topologically conjugate to Delone
systems. This conjugacy also preserves linear repetitivity.
Consequently, the results that we present can be easily extended
to linearly repetitive tiling systems.

The main result of this paper was obtained in the context of
subshifts in \cite{Du1}. A key tool used in \cite{Du1}, is the
existence of sliding-block-codes for factor maps between subshifts
(Curtis-Hedlund-Lyndon Theorem). Unlike subshifts, factor maps
between two tiling systems are not always sliding-block-codes (see
\cite{Pe} and \cite{RS}). The lack of this property appears to be
the main difficulty of this work. To surmount this obstacle, we
carefully dissect continuity of factor maps, by means of
Vorono\"{\i} cells and return vectors.

This paper is organized as follows: in Section
\ref{Definitions_and_background} we recall basic concepts and
results about Delone systems. In Section
\ref{Preimages_of_factor_maps}  we show the factor maps from
linearly repetitive Delone systems to Delone systems  are
finite-to-one. Finally, Section
\ref{Number_of_factors_of_linearly_repetitive_Delone_systems} is
devoted to the proof of the main theorem.


\section{Definitions and background}
\label{Definitions_and_background}
\label{definitions} In  this section we give the basic definitions
and properties concerning Delone sets. For more details we refer
to \cite{LP} and \cite{Ro}. Let $r$ and $R$ be two positive real
numbers. A $(r,R)$-{\em Delone set} $X$ is a discrete subset of
$\RR^d$ satisfying the following two properties:

\begin{enumerate}
 \item
{\em Uniform discreteness}: each open ball of radius $r >0$ in
$\RR^d$ contains at most one point of $X$. \item {\em Relative
density}: each closed ball of radius $R>0$ in $\RR^d$ contains at
least one point of $X$.
\end{enumerate}
A
$(r,R)$-Delone set $X$, in short a {\em Delone set},  is  of {\em
finite type} if $X - X$ is {\em locally finite}, i.e. the
intersection of $X -X$ with any bounded set is finite. \\
The translation by a vector $v \in \RR^d$ of a Delone set $X$, is
the Delone set  $X-v$ obtained after translating every point of
$X$ by $-v$.  Observe that  $X-v$ is of finite type if and only if
$X$ is of finite type.  A Delone set is said to be {\em non
periodic} if $X-v =X$ implies $v=0$.

Let $R>0$ and $X$ be a Delone set. We say that $P\subseteq X$ is
the $R$-{\em patch} of $X$ centered at the point $y\in \RR^d$ if
$$P = X \cap B_R (y),$$
where $B_R (y)$ denotes the open ball of a radius $R$ centered at
$y$. If there is no confusion, we refer to a $R$-patch of $X$
merely as a patch. A {\em sub-patch} of the patch $P$  is a
patch of $X$ included in $P$. 
 A patch $Q$ is a {\em translated} of the patch  $P$  if there
exists $v\in \RR^d$ such that $P-v=Q$. The vector  $v\in \RR^d$ is
a {\it return vector} of  the patch $P$ in $X$ if $P-v$ is a patch
of $X$. An {\em occurrence} of the patch $P$ of $X$ centered at
$y\in \RR^d$ is a point $w\in \RR^d$  such that $y-w$ is a return
vector of $P$. Observe the patch $P-(y-w)$ is the  translated of
$P$ centered at $w$.

The $R$-{\em atlas} $\A_X(R)$ of $X$ is the collection of all the
$R$-patches centered at a point  of $X$ translated to the origin.
More precisely:
$$ \A_{X}(R) = \{ X\cap B_R(x) -x ; \  x\in X \}.
$$
The atlas $\A_X$ of $X$ is the union of all the $R$-atlases, for
$R> 0$. Notice that $X$ is of  finite type if and only if
$\A_{X}(R)$ is finite for every $R>0$.

The Delone set $X$ is {\em repetitive} if for each $R > 0$ there
is a finite number $M > 0$, such that for every closed ball $B$ of
radius $M$ the set $B\cap X$ contains a translated patch of every
$R$-patch of $X$. Observe that any repetitive Delone set is
necessarily of finite type.

The {\em Vorono\"{\i} cell} of a point $x \in X$ is the compact
subset
$$V_{x} = \{ y \in \RR^d; \  \vert\vert x-y \vert\vert \leq \vert\vert x'-y \vert\vert \ \mbox{ for any} \ x' \in X\}. $$
Notice that if $X$ is a Delone set of finite type, then each
Vorono\"{\i} cell of $X$ is a polyhedra, and there is a finite
number of Vorono\"{\i} cells of $X$ up to translations.

\subsection{Delone systems}
We denote by $\D$ the collection of the Delone sets of $\RR^d$.
The group $\RR^d$ acts on $\D$ by  translations:
$$(v,X)\mapsto X-v  \mbox{ for } v\in \RR^d \mbox{ and } X\in \D.$$
Furthermore, this action is continuous with the topology induced
by the following distance: take $X$, $X'$ in $\D$, and define $A$
the set of $\varepsilon \in (0,\frac{1}{\sqrt{2}})$ such that
there exist $v$ and $v'$ in $B_{\varepsilon}(0)$  with
$$(X-v)\cap B_{1/\varepsilon}(0)=(X'-v')\cap
B_{1/\varepsilon}(0),$$ we set
$$
d(X,X')=\left\{\begin{array}{cc}
                  \inf A & \mbox{ if } A\neq \emptyset \\
                 \frac{1}{\sqrt{2}} & \mbox{ if } A=\emptyset.\\
               \end{array}\right.
$$
Roughly speaking, two Delone sets are close if they have the same
pattern in a large neighborhood of the origin, up to a small
translation.


A {\em Delone system} is a pair $(\Omega, \RR^d)$ such that
$\Omega$ is a translation invariant closed subset of $\D$. The
orbit closure of a Delone set $X$ in $\D$ is the set
$\Omega_{X}=\overline{\{X+v: v\in \RR^d\}}$. This is invariant by
the $\RR^d$-action, and, it is compact if and only if $X$ is of
finite type (see \cite{Ro} and \cite{Ru}). Every $X' \in
\Omega_{X}$ is a $(r,R)$-Delone set if $X$ is a $(r,R)$-Delone
set, and for any real $R>0$, we have $\A_{X'} (R)\subset
\A_{X}(R)$. If all the orbits are dense in $\Omega_{X}$, the
Delone system $(\Omega_{X},\RR^d)$ is said to be {\em minimal}. It
is shown in \cite{Ro} that the Delone set $X$ is repetitive if and
only if the system $(\Omega_{X},\RR^d)$ is minimal. In that case,
for any $X' \in \Omega_{X}$ and any $R>0$ the $R$-atlases $\A_{X'}
(R), \A_{X}(R)$ are the same. If in addition, $X$ is non periodic,
then every Delone set in $\Omega_{X}$ is non periodic.




A {\em factor map}   between two Delone systems
$(\Omega_{1},\RR^d)$ and $(\Omega_2,\RR^d)$ is a continuous
surjective map $\pi:\Omega_1\to \Omega_2$ such that
$\pi(X-v)=\pi(X)-v$, for every $X\in \Omega_1$ and $v\in \RR^d$.

In symbolic dynamics it is well-known that topological factor maps
between subshifts are always given by sliding-block-codes.  There
are examples  which show that this result can not be extended to
Delone systems (\cite{Pe}, \cite{RS}). The following lemma shows
that factor maps between Delone systems are not far to be
sliding-block-codes. A similar result  can be found in \cite{HRS}.

\begin{lemma}
\label{semi-sliding-block-code} Let $X_1$ and $X_2$ be two Delone
sets.
Suppose $X_1$ is of finite type and  $\pi: \Omega_{X_1}\to
\Omega_{X_2}$ is a factor map. Then, there exists a constant
$s_0>0$ such that for every   $\varepsilon>0$, there exists
$R_{\varepsilon}>0$ satisfying the following: For any  $R\geq
R_{\varepsilon}$, if  $X$ and $X'$  in $\Omega_{X_1}$ verify
$$X\cap B_{R+s_0}(0)=X'\cap B_{R+s_0}(0),$$  then
$$(\pi(X)-v)\cap B_{R}(0)=\pi(X')\cap B_{R}(0)$$ for some $v\in
B_{\varepsilon}(0)$.
\end{lemma}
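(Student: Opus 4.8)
The plan is to extract the ``sliding-block'' behaviour directly from the continuity and equivariance of $\pi$, using compactness of $\Omega_{X_1}$ together with the fact that $X_1$ is of finite type (so that $\Omega_{X_1}$ is compact and the $R$-atlases are finite). First I would fix the distance $d$ on $\D$ defined in the excerpt and recall that $\pi$ is uniformly continuous on the compact metric space $\Omega_{X_1}$. The quantity $s_0$ should come from the modulus of continuity at a single fixed scale: since $\pi$ is continuous at every point, and $\Omega_{X_1}$ is compact, there is $\delta_0>0$ such that $d(X,X')<\delta_0$ implies $d(\pi(X),\pi(X'))<\tfrac12$, say; unwinding the definition of $d$, if two Delone sets in $\Omega_{X_1}$ agree on a ball of radius $1/\delta_0$ (up to a translation smaller than $\delta_0$), their images agree on a unit ball up to a small translation. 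I would then set $s_0$ to be (roughly) $1/\delta_0$ plus a constant, so that agreement of $X,X'$ on $B_{s_0}(0)$ already forces agreement of $\pi(X),\pi(X')$ near the origin, up to a small shift.

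Next, for a prescribed $\varepsilon>0$ I would produce $R_\varepsilon$ by a compactness-and-continuity argument applied at scale $R$. The key point is equivariance: if $X\cap B_{R+s_0}(0)=X'\cap B_{R+s_0}(0)$, then for every translation vector $w$ with $\|w\|\le R$, the translated sets $X-w$ and $X'-w$ agree on a ball of radius $s_0$ about the origin, namely $B_{s_0}(w)\subseteq B_{R+s_0}(0)$. By the previous paragraph (applied to $X-w$ and $X'-w$), $\pi(X-w)=\pi(X)-w$ and $\pi(X'-w)=\pi(X')-w$ agree near the origin up to a small translation; that is, $\pi(X)$ and $\pi(X')$ look the same in a neighbourhood of $w$, modulo a small local shift. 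The remaining work is to upgrade this ``locally the same modulo small shift, at every point of $B_R(0)$'' into a single global statement ``$(\pi(X)-v)\cap B_R(0)=\pi(X')\cap B_R(0)$ for one vector $v\in B_\varepsilon(0)$'': this is where uniform continuity at scale $R$, rather than at a fixed scale, is needed. Concretely, choosing $R_\varepsilon$ so large that $1/R_\varepsilon<\delta_\varepsilon$, where $\delta_\varepsilon$ is a modulus of continuity of $\pi$ forcing $d(\pi(X),\pi(X'))<\varepsilon$, and then observing that the family of allowed local shifts is consistent (the patch of $\pi(X)$ on a large ball determines a single $v$), gives the claim.

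The main obstacle, and the heart of the lemma, is precisely the last step: passing from a collection of local coincidences ``$\pi(X)$ and $\pi(X')$ agree near $w$ up to a small vector $v_w$'' to the existence of one uniform $v$ working on all of $B_R(0)$. Without sliding-block-codes there is no a priori reason the local shifts $v_w$ agree as $w$ varies. I expect to resolve this by noting that once $\pi(X)$ and $\pi(X')$ are within $\varepsilon'$ of each other in the metric $d$ for a suitably small $\varepsilon'$ (obtained from genuine uniform continuity of $\pi$ on the compact set $\Omega_{X_1}$ at the scale $R$, not at a fixed scale $s_0$), the definition of $d$ itself hands us a single $v\in B_{\varepsilon}(0)$ with $(\pi(X)-v)\cap B_R(0)=\pi(X')\cap B_R(0)$; the role of the finite-type hypothesis and the $s_0$-argument is to guarantee that such an $\varepsilon'$-closeness is indeed implied by agreement of $X,X'$ on the slightly larger ball $B_{R+s_0}(0)$, with $s_0$ independent of both $\varepsilon$ and $R$. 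I would organize the write-up so that $s_0$ is fixed first from continuity at a unit scale, and then $R_\varepsilon$ is chosen afterward from continuity at the scale $1/\varepsilon$, which keeps the quantifier order in the statement correct.
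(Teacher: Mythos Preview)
Your outline correctly isolates the heart of the matter: from agreement of $X,X'$ on $B_{R+s_0}(0)$ one obtains, for each $a\in B_R(0)$, a small local shift $t(a)$ making $\pi(X)$ and $\pi(X')$ coincide near $a$, and the problem is to replace the family $\{t(a)\}$ by a single $v\in B_\varepsilon(0)$ working on all of $B_R(0)$. However, your proposed resolution of this step does not go through. You suggest that uniform continuity of $\pi$ at the scale $R$ (or $1/\varepsilon$) will give $d(\pi(X),\pi(X'))<\varepsilon'$ with $\varepsilon'$ so small that the metric itself produces a single $v$ with $(\pi(X)-v)\cap B_R(0)=\pi(X')\cap B_R(0)$. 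For that one needs $\varepsilon'\le 1/R$, hence $d(\pi(X),\pi(X'))<1/R$; by uniform continuity this would require $d(X,X')<\delta(1/R)$ for some modulus $\delta$, while the hypothesis only gives $d(X,X')\lesssim 1/(R+s_0)$. Nothing prevents $\delta(\eta)$ from decaying much faster than $\eta$ as $\eta\to 0$, so the implication ``agreement on $B_{R+s_0}(0)$ with \emph{fixed} $s_0$ forces $d(\pi(X),\pi(X'))<1/R$'' is exactly what you cannot get from continuity alone. In particular, your final organization (fixing $R_\varepsilon$ from continuity at scale $1/\varepsilon$) only yields agreement of the images on the fixed ball $B_{1/\varepsilon}(0)$, not on $B_R(0)$ for arbitrarily large $R\ge R_\varepsilon$.

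The missing idea is a rigidity property of the \emph{image} system, not further continuity of $\pi$. Because $\Omega_{X_2}$ is compact, every element is an $(r_0,R_0)$-Delone set; this forces that if two translates of the same image patch of radius $>R_0$ coincide, the translation vector is either $0$ or of norm at least $r_0/2$. Choosing $s_0$ so that the local shifts $t(a)$ all lie in $B_{\delta_0}(0)$ with $\delta_0<r_0/4$, any two candidate shifts at nearby points differ by less than $r_0/2$ and hence must be equal: the map $a\mapsto t(a)$ is locally constant, so constant on the connected ball $B_R(0)$. One then uses uniform continuity a single time, at scale $\varepsilon$, to produce a $v\in B_\varepsilon(0)$ that works near the origin and to identify it with $t(0)$; constancy then propagates this $v$ over all of $B_R(0)$. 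This discreteness-based gluing is what replaces the sliding-block-code and is absent from your proposal.
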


\begin{proof}
The Delone set $X_2$ is also of finite type because $\Omega_{X_2}$
is compact. Let $r_0$ and $R_0$ be a positive constant such that
$X_2$ is a $(r_0,R_0)$-Delone set. Since all the elements of
$\Omega_{X_{2}}$ are $(r_0, R_0)$-Delone sets, if two different
points $y_1,y_2$ of $\RR^d$ satisfy $(X-y_1)\cap
B_R(a)=(X-y_2)\cap B_R(a)$ for some $X\in \Omega_{X_2}$, $a \in
\RR^d$  and  $R> R_{0}$, then $\|y_1-y_2\|\geq \frac{r_{0}}{2}$
(for the details see \cite{So1}).

\medskip

Let  $0<\delta_0<\min\{\frac{r_{0}}{4}, \frac{1}{ R_{0}} \}$.
Since $\pi$ is uniformly continuous, there exists $s_0>1$  such
that if $X$ and $X'$  in $\Omega_{X_1}$ verify $X\cap
B_{s_0}(0)=X'\cap B_{s_0}(0)$ then
  $$(\pi(X)-v)\cap B_{\frac{1}{\delta_0}}(0)=\pi(X')\cap
B_{\frac{1}{\delta_0}}(0),$$  for some  $v\in B_{\delta_0}(0)$.
Let $0<\varepsilon< \delta_0$. By uniform continuity of $\pi$,
there exists $0<\delta<\frac{1}{s_0}$ such that if  $X$ and $X'$
in $\Omega_{X_1}$ verify $X\cap B_{\frac{1}{\delta}}(0)=X'\cap
B_{\frac{1}{\delta}}(0)$ then
\begin{equation}
\label{continuidad} (\pi(X)-v)\cap
B_{\frac{1}{\varepsilon}}(0)=\pi(X')\cap
B_{\frac{1}{\varepsilon}}(0),
\end{equation}
for some  $v\in B_{\varepsilon}(0)$. Now fix $R\geq
R_{\varepsilon}=\frac{1}{\delta} -s_0$, and  let $X$ and $X'$ be
two Delone sets  in $\Omega_{X_1}$  verifying
\begin{equation}
\label{motivo2} X\cap B_{R+s_0}(0)=X'\cap B_{R+s_0}(0).
\end{equation}
Observe that $X$  and $X'$ satisfy (\ref{continuidad}), and
$(X-a)\cap B_{s_0}(0)=(X'-a)\cap B_{s_0}(0)$, for every $a$ in
$B_{R}(0)$. The choice of $s_0$ ensures  that
\begin{equation}
\label{motivo1} (\pi(X)-a-t(a))\cap
B_{\frac{1}{\delta_0}}(0)=(\pi(X')-a)\cap
B_{\frac{1}{\delta_0}}(0),
\end{equation}
for some $t(a)\in B_{\delta_0}(0)$. Let us prove the map $a\to
t(a)$ is locally constant. For $a\in B_R(0)$, let
$0<s_a<\frac{1}{\delta_0}-R_0$ be such that $B_{s_a}(a)\subseteq
B_R(0)$. Every $a'\in B_{s_a}(0)$ verifies
$B_{\frac{1}{\delta_0}-\|a'\|}(-a') \subset
B_{\frac{1}{\delta_0}}(0)$. Let $a'\in B_{s_a}(0)$. This inclusion
and (\ref{motivo1}) imply
\begin{equation}
\label{new1} (\pi(X)-a-a'-t(a))\cap
B_{\frac{1}{\delta_0}-\|a'\|}(-a')=(\pi(X')-a-a')\cap
B_{\frac{1}{\delta_0}-\|a'\|}(-a').
\end{equation}
On the other hand, from the definition of the map $a\to t(a)$ we
deduce
\begin{equation*}
(\pi(X)-a-a'-t(a+a'))\cap B_{\frac{1}{\delta_0}}(0)=
(\pi(X')-a-a')\cap B_{\frac{1}{\delta_0}}(0),
\end{equation*}
which implies
\begin{equation}
\label{new2} (\pi(X)-a-a'-t(a+a'))\cap
B_{\frac{1}{\delta_0}-\|a'\|}(-a')= (\pi(X')-a-a')\cap
B_{\frac{1}{\delta_0} -\|a'\|}(-a').
\end{equation}
Since $\|t(a)-t(a+a')\| \leq \frac{r_{0}}{2}$, from equations
(\ref{new1}), (\ref{new2}) and the remark of the beginning of the
proof we conclude  $t(a)=t(a+a')$ for every $a' \in B_{s}(0)$.
Therefore the map $a\mapsto t(a)$ is constant on $B_{s_a}(a)$.

Furthermore, due to $\delta_0>\varepsilon$ and  (\ref{motivo2}),
Equation (\ref{continuidad}) implies there exists $v\in
B_{\varepsilon}(0)$ such that
\begin{equation}
\label{motivo3} (\pi(X)-v)\cap
B_{\frac{1}{\delta_0}}(0)=\pi(X')\cap B_{\frac{1}{\delta_0}}(0).
\end{equation}

For $a=0$, from (\ref{motivo1}) and (\ref{motivo3}) we have that
$t(0)=v$ or $\|v-t(0)\|\geq \frac{r_0}{2}$. Since $\|t(0)-v\|\leq
\delta_0+\varepsilon <2\delta_0< \frac{r_{0}}{2}$, we conclude
$t(0)=v$ and then $t(a)=v$ for every $a\in B_R(0)$. This property
together with (\ref{motivo1}) and (\ref{motivo3}) imply that
$$(\pi(X)-v)\cap B_{R}(0)=\pi(X')\cap B_{R}(0).$$ This conclude
the proof.
\end{proof}

\section{Preimages of factor maps.}
\label{Preimages_of_factor_maps}

 In the rest of this paper we suppose that all the Delone sets
are of finite type.

A Delone set $X$ is {\it linearly repetitive}  if there exists a
constant $L>0$ such that for every patch $P$ in $X$, any ball of
radius $L\diam(P)$ intersected with $X$ contains a translated
patch of $P$. In this instance we say that $X$ is {\em linearly
repetitive with constant} $L$. Notice the constant $L$ must be
greater or equal than $1$, and if $X$ is linearly repetitive with
constant $L$, then it is linearly repetitive with constant $L'$,
for every $L'>L$. Every Delone set in the orbit closure of a
linearly repetitive Delone set is linearly repetitive with the
same constant. When $X$ is linearly repetitive, we call
$(\Omega_{X},\RR^d)$ a {\it linearly repetitive} Delone system.

The following lemma shows  the factors of linearly repetitive
systems  are also linearly repetitive with a uniform control on
the constants. This was already proven for subshifts in
\cite{Du1}.

\begin{lemma}
\label{LR} Let $X$ be a linearly repetitive Delone set with
constant $L$. If $X'$ is a Delone set such that
$(\Omega_{X'},\RR^d)$ is a topological factor of
$(\Omega_{X},\RR^d)$, then there exists a constant $\tau_{X'}>0$
such that if $P$ is a patch of $X'$ with $\diam(P)\geq \tau_{X'}$,
then for any $y \in \RR^d$, the set $X' \cap B_{5L\diam(P)}(y)$
contains a translated patch  of $P$.
\end{lemma}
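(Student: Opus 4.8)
The plan is to transfer linear repetitivity from $X$ to $X'$ using the semi-sliding-block-code structure provided by Lemma~\ref{semi-sliding-block-code}. Let $\pi:\Omega_{X}\to\Omega_{X'}$ be the factor map, and let $s_0$ be the constant given by Lemma~\ref{semi-sliding-block-code}. Fix once and for all a value $\varepsilon_0>0$ smaller than the uniform discreteness radius $r'$ of $X'$ (divided by $2$, say), and let $R_{0}:=R_{\varepsilon_0}$ be the associated radius; this produces a ``coding radius'': whenever two elements of $\Omega_{X}$ agree on $B_{R+s_0}(0)$ for $R\ge R_0$, their $\pi$-images agree on $B_R(0)$ up to a translation by less than $\varepsilon_0$, and since $\varepsilon_0$ is below the discreteness scale, that small translation is in fact forced to be $0$ on the patch level after the usual rigidity argument (the remark at the start of the proof of Lemma~\ref{semi-sliding-block-code}). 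In other words $\pi$ is, on large enough patches, an honest sliding-block-code with window $s_0$.

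Next I would run the repetitivity argument. Given a patch $P=X'\cap B_{\rho}(y)$ of $X'$ with $\rho=\diam(P)$ large, pick $X\in\pi^{-1}(X'')$ for a suitable $X''\in\Omega_{X'}$ realizing $P$ at the origin (translate so that $P$ is centered at $0$). By the coding property, the patch $Q:=X\cap B_{\rho+s_0+R_0}(0)$ of $X$ determines, via $\pi$, the patch $P$: any other occurrence in any $X$-type Delone set of a translate of $Q$ produces a translate of $P$ in the corresponding $X'$-type set. Now invoke linear repetitivity of $X$ with constant $L$: a translate of $Q$ occurs in every ball of radius $L\diam(Q)$ in $X$. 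Since $\diam(Q)\le 2(\rho+s_0+R_0)$, and since for $\rho\ge\tau_{X'}$ with $\tau_{X'}$ chosen large enough (depending on $s_0$, $R_0$, $L$) we have $2L(\rho+s_0+R_0)\le 5L\rho$, every ball of radius $5L\rho$ in $X$ contains a translate of $Q$, hence the corresponding ball in $X'$ contains a translate of $P$. A small amount of bookkeeping is needed to pass from ``balls in $X$'' to ``balls in $X'$'', using that $\pi$ is orbit-respecting and that occurrences of $Q$ at a return vector $v$ map to occurrences of $P$ at (essentially) the same $v$; the codomain ball of radius $5L\rho$ absorbs the bounded discrepancies coming from $s_0$, $R_0$ and the relative density radii of $X$ and $X'$. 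Choosing $\tau_{X'}$ to dominate all these additive constants finishes the estimate.

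The main obstacle is the last point: because factor maps between Delone systems are not genuine sliding-block-codes, one must be careful that the small translation $v\in B_{\varepsilon_0}(0)$ appearing in Lemma~\ref{semi-sliding-block-code} does not accumulate or drift as one moves the observation window across a large ball. This is exactly the ``locally constant correction'' phenomenon already handled inside the proof of Lemma~\ref{semi-sliding-block-code} (the map $a\mapsto t(a)$ there is shown to be constant), so the right move is to reuse that mechanism: fix $\varepsilon_0$ below half the discreteness radius of $X'$, so that the correction vector is pinned to a single value on the whole relevant region, and then the occurrence-to-occurrence correspondence $v\mapsto v$ (up to that one fixed, negligible shift) is clean. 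Once this rigidity is in hand the rest is a routine comparison of radii, and the constant $5$ in $5L\diam(P)$ is comfortably large enough to absorb the overhead provided $\diam(P)\ge\tau_{X'}$ is assumed.
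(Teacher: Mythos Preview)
Your overall strategy is exactly the paper's: lift the patch $P$ of $X'$ to a patch $Q$ of $X$ via Lemma~\ref{semi-sliding-block-code}, use linear repetitivity of $X$ to place translates of $Q$ in every large ball, and push back down through $\pi$. The one genuine error is your claim that by taking $\varepsilon_0$ below the discreteness scale the correction vector $v\in B_{\varepsilon_0}(0)$ is ``forced to be $0$ on the patch level,'' making $\pi$ an honest sliding-block-code on large patches. That is false in general: the rigidity remark at the start of the proof of Lemma~\ref{semi-sliding-block-code} compares two translates of the \emph{same} Delone set, whereas here you are comparing $\pi(X)$ and $\pi(X')$ for distinct $X,X'$. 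The paper itself stresses (citing \cite{Pe}, \cite{RS}) that factor maps between Delone systems need not be sliding-block-codes, so the small shift is typically nonzero.

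Fortunately, none of your worries about ``drift'' or ``accumulation'' are relevant, and the fix is simpler than what you propose. Each occurrence $w$ of $Q$ in $X$ yields, by a single application of Lemma~\ref{semi-sliding-block-code}, an occurrence $w+t_w$ of $P$ in $X'$ with $t_w\in B_{\varepsilon}(0)$; there is no need for the $t_w$'s to agree across different $w$. The paper simply takes $Q=(X-v)\cap B_{D+s_0}(0)$ (no $R_{\varepsilon}$ in the window), chooses $\varepsilon<Ls_0$, and absorbs the shift into the radius estimate $2L(D+s_0)+\varepsilon\le 5LD$ once $D\ge\tau_{X'}:=\max\{s_0,R_\varepsilon,R_X,R_{X'}\}$. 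Drop the $v=0$ claim and the extra $R_0$ in your window, and your argument becomes the paper's.
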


\begin{proof}
Let $\pi:\Omega_{X}\to\Omega_{X'}$ be a topological factor, where
$X$ is a $(r_X,R_X)$-linearly repetitive Delone set  with constant
$L$, and $X'$ is a $(r_{X'},R_{X'})$-Delone set. We can assume
that $\pi(X)=X'$. Let $s_0>0$ be the constant of Lemma
\ref{semi-sliding-block-code}. Fix $0<\varepsilon<Ls_{0}$ and
consider $R_{\varepsilon}>0$  as in Lemma
\ref{semi-sliding-block-code}. We set
$$
\tau_{X'}=\max\{s_0, R_{\varepsilon}, R_{X}, R_{X'}\}.
$$
Let $P$ be a patch in $X'$ with $\diam(P)=D\geq \tau_{X'}$, and
let $v\in P \subset X'$. Let $Q=(X-v)\cap B_{D+s_0}(0)$. Since
$\diam(Q)\leq 2(D+s_0)$, for every $y\in \RR^d$ there exists $w\in
B_{2L(D+s_0)}(y)$ such that $(X-w)\cap B_{D+s_0}(0)=Q$. Then, from
Lemma \ref{semi-sliding-block-code} there exists $t\in
B_{\varepsilon}(0)$ such that
$$(X'-v )\cap B_{D}(0)=(X'-w-t)\cap B_{D}(0).$$
Since $(X'-v )\cap B_{D}(0)$ contains a translated of $P$, this
shows that every ball of radius $2L(D+s_0)+\varepsilon\leq 5LD$ in
$X'$ contains a translated  of $P$ as sub-patch.
\end{proof}


The next Lemma follows the same lines of Lemma 2.4 in \cite{So2}.
We show the set of occurrences of a $R$-patch of a linearly
repetitive Delone set and its factors is uniformly discrete with a
constant depending linearly on $R$.


\begin{lemma}
\label{constantequeminora} Let $X$ be a non periodic linearly
repetitive Delone set with constant $L$, and let $X'$ be  a non
periodic Delone set such that $(\Omega_{X'},\RR^d)$ is a
topological factor of $(\Omega_{X},\RR^d)$. There exists a
constant $M_{X'}>0$ such that for every $R\geq M_{X'}$ and for
every $R$-patch $P$ of $X'$,    if $x \in \RR^d\setminus\{0\}$ is
a return vector of $P$, then  $\| x\| \geq R/(11 L)$.
\end{lemma}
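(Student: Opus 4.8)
The plan is to argue by contradiction: assuming an $R$-patch $P = X'\cap B_R(y)$ of $X'$, with $R$ at least a constant $M_{X'}$ fixed below, has a return vector $x\neq 0$ with $\delta:=\|x\|<R/(11L)$, I will force a periodicity defect of $X'$ to reappear, by linear repetitivity, strictly closer to $y$ than its genuine nearest occurrence --- contradicting that $X'$ is non periodic.

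I begin with the setup and the reformulation of the hypothesis. By the standing assumption $X'$ is a Delone set of finite type; fix a relative density radius $R_{X'}$ for it, and let $\tau_{X'}>0$ be the constant of Lemma~\ref{LR}, so that every patch $D$ of $X'$ with $\diam(D)\geq\tau_{X'}$ occurs as a translated sub-patch in every ball of radius $5L\diam(D)$. I take $M_{X'}$ to be a sufficiently large explicit multiple of $L(\tau_{X'}+R_{X'}+1)$, the precise size being dictated by the inequalities below. Unwinding the definition of a return vector of an $R$-patch, $x$ being such a vector for $P$ means $(X'\cap B_R(y))-x = X'\cap B_R(y-x)$, equivalently: for every $u\in B_R(y)$ one has $u\in X'\iff u-x\in X'$. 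Let $Z:=X'\triangle(X'+x)$ be the (discrete) set of points where $X'$ and $X'+x$ disagree; the displayed equivalence says $Z\cap B_R(y)=\emptyset$, while $Z\neq\emptyset$ because $X'$ is non periodic. As $Z\subseteq X'\cup(X'+x)$ is uniformly discrete, it has a point $p$ nearest to $y$; set $\rho:=\|p-y\|=\dist(y,Z)\geq R\geq M_{X'}$, and note exactly one of $p,\,p-x$ lies in $X'$.

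The heart of the argument is to transplant this defect close to $y$. Put $S:=\max\{\tau_{X'},\,4R_{X'},\,\delta+1\}$ and $D:=X'\cap B_S(p)$. A relative density argument gives $2S-4R_{X'}\leq\diam(D)\leq 2S$, so $\diam(D)\geq S\geq\tau_{X'}$; and since $\delta<S$, the positions inside $D$ of its centre $p$ and of the point $p-x$ encode whether $p\in X'$ and whether $p-x\in X'$. Applying Lemma~\ref{LR} to $D$ at $y$, there is $q$ with $X'\cap B_S(q)=(X'\cap B_S(p))-(p-q)$, this patch sitting inside $B_{5L\diam(D)}(y)$; since $S>R_{X'}$ one deduces $\|q-y\|\leq 5L\diam(D)-S+2R_{X'}\leq 10LS+2R_{X'}$. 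The translation identity gives $w\in X'\iff w+(p-q)\in X'$ for all $w\in B_S(q)$; taking $w=q$ and $w=q-x$ (both in $B_S(q)$, as $\delta<S$) shows $q\in X'\iff p\in X'$ and $q-x\in X'\iff p-x\in X'$, hence $q\in Z$. On the other hand, inserting $S=\max\{\tau_{X'},4R_{X'},\delta+1\}$ and using $\delta<R/(11L)\leq\rho/(11L)$ with $\rho\geq M_{X'}$ gives $10LS+2R_{X'}<\rho$: the parts $10L\tau_{X'}$ and $40LR_{X'}$ are below $\rho$ because $\rho\geq M_{X'}$, and $10L(\delta+1)+2R_{X'}<\tfrac{10}{11}\rho+10L+2R_{X'}<\rho$ once $M_{X'}>11(10L+2R_{X'})$. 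Thus $q\in Z$ with $\|q-y\|<\rho=\dist(y,Z)$, a contradiction. The case $p\notin X'$, $p-x\in X'$ is handled by the same computation, since it only uses that $D$ reproduces the two membership bits at $p$. This contradiction proves the lemma.

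The genuinely delicate step is the last one: the witness patch $D$ has to be large enough to invoke Lemma~\ref{LR} ($\diam(D)\geq\tau_{X'}$), its recurrence radius $5L\diam(D)$ must nonetheless stay strictly below $\rho\geq R$, and $D$ has at the same time to be wide enough ($S>\delta$) to ``see'' the defect at distance $\delta=\|x\|$. These are compatible precisely because $\delta<R/(11L)$: then $5L\cdot 2\delta<\tfrac{10}{11}R$, which leaves a full $\tfrac1{11}R$ of slack --- dominating the lower-order terms $10L$ and $2R_{X'}$ once $R\geq M_{X'}$ --- to land the new occurrence inside $B_\rho(y)$. The remaining ingredients are routine: the reformulation of ``return vector'' through the defect set $Z$, the fact that a uniformly discrete set realises its distance to a point, the elementary bound $\diam(X'\cap B_S(p))\geq 2S-4R_{X'}$, and the standard fact that a translate of an $S$-patch which is again a patch of $X'$ is the $S$-patch at the translated centre.
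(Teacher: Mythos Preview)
Your proof is correct and the key mechanism---using Lemma~\ref{LR} to fit a small witness patch of diameter $O(\delta)$ inside a ball of radius $<R$---is the same as the paper's.  The organisation of the contradiction, however, is genuinely different.

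The paper argues \emph{globally}: for every $y\in\RR^d$ it embeds the slightly enlarged patch $S_y=X'\cap B_{R'+\|x\|}(y)$ (with $R'$ fixed, just large enough that $\diam(S_y)\geq\tau_{X'}$) into $P$ via Lemma~\ref{LR}, then uses that $P+x$ is again a patch of $X'$ to deduce $Q_y+x=Q_{y+x}$ for the smaller patch $Q_y=X'\cap B_{R'}(y)$.  Since $y$ is arbitrary this gives $X'+x=X'$ outright, contradicting non-periodicity.  Your argument is \emph{local} and contrapositive: you first invoke non-periodicity to produce a nearest defect $p\in Z=X'\triangle(X'+x)$, then use Lemma~\ref{LR} once, on the single patch $D=X'\cap B_S(p)$, to transplant that defect strictly closer to $y$.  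The paper's route is a bit cleaner (no auxiliary set $Z$, no nearest-point selection, and the size $R'$ of the witness patch does not depend on $\delta$), while yours makes the role of the constant $11$ perhaps more visible: the slack $\tfrac{1}{11}\rho$ left over after the $10L\delta$ term is exactly what absorbs the lower-order constants once $R\geq M_{X'}$.  Two cosmetic remarks: the bound you state for $\|q-y\|$ is slightly optimistic (a direct relative-density argument gives $\|q-y\|\leq 5L\,\diam(D)+R_{X'}$, not $5L\,\diam(D)-S+2R_{X'}$), but this changes nothing in the subsequent inequality; and your appeal to the ``standard fact'' that the translate of an $S$-patch is again the $S$-patch at the translated centre is exactly what the proof of Lemma~\ref{LR} furnishes.
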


\begin{proof}
Let  $R'>0$ be a real such that any patch of the kind $X' \cap
B_{R'}(y)$, with $y \in\RR^d$,   has  diameter greater than
$\tau_{X'}$, where $\tau_{X'}$ is the constant given by Lemma
\ref{LR}. Let $M_{X'} = 110 L R'+R' $ and   $P$ be the $R$-patch
$X' \cap B_{R}(v)$ with  $R>M_{X'}$ and  $v \in \RR^d$. Suppose
there exists $x\in \RR^d$, with $0<\|x\|<R/(11L)$, such that $P+x$
is a patch of $X'$. For any $y\in \RR^d$, consider the patches
$$
Q_y=X'\cap B_{R'}(y) \mbox{ and}\ S_y = X'\cap B_{R'+\|x\|}(y).
$$

Since
$$
\tau_{X'}\leq \diam(S_y)\leq 2(R'+\|x\|),
$$
from Lemma \ref{LR}, every ball of radius $10L(R'+\|x\|)$
intersected with $X'$ contains a translated of $S_y$. By  the very
hypothesis, we have
$$
10 L(R'+\|x\|) < 10 LR'+ \frac{10 R}{11} \leq \frac{R}{11} +
\frac{10R}{11} =R.$$

This implies  there exists $w\in \RR^d$ such that $S_{y}+w$ is a
sub-patch of $X'\cap B_{R} (v) =P$. Because $P+x$ is also a patch
of $X'$, we have $Q_{y}+w+x$ is also a patch of $X'$ and a
sub-patch of $S_{y}+w$. Hence $Q_y + w +x=Q_{y+x}+w$ and $$Q_{y}
+x=Q_{y+x}.$$  Since $y$ is arbitrary, we conclude that $X'+x=X'$,
which contradicts the non periodicity of $X'$ if $x\neq 0$.
\end{proof}
We recall the following definition: A factor map
$\pi:(\Omega,\RR^d)\to (\Omega',\RR^d)$ is said to be {\it
finite-to-one} (with constant $D$) if for all $y\in Y$ we have
$|\pi^{-1}(\{y\})|\leq D$.

 The next result is a technical lemma we use in  Proposition \ref{finite-to-one0}
to show that factor maps between  linearly repetitive Delone
systems  are finite-to-one.

\begin{lemma}\label{nbrepreimage}
Let $\pi : (\Omega_{X}, \RR^d) \to (\Omega_{X'}, \RR^d) $ be a
factor map, where   $X$ is a linearly repetitive Delone set with
constant $L$, and $X'$ is a non periodic Delone set. We denote by $s_{0}$ the constant given by Lemma \ref{semi-sliding-block-code}. \\
For every $0<\varepsilon<\frac{s_0}{2}$, there exists a constant
$R_{\pi}$ such that for any $R
> R_{\pi}$ there are at most $n\leq
(55L^2)^d$ patches $P_{1}, \ldots, P_{n}$ satisfying for every
$1\leq i\leq n$ the following conditions:
\begin{itemize}
\item[$i)$] $P_{i}=(X-w_{i})\cap B_{R + s_{0}}(0)$, for some
$w_{i} \in \RR^d$,

 \item[$ii)$] If $X"$ belongs to $\Omega_{X}$ and $X" \cap B_{R+s_{0}}(0) = P_{i}$, then
there exists $v\in B_{\epsilon} (0)$ such that
$$ (\pi(X")-v) \cap B_{R}(0) = \pi(X)\cap B_R(0),$$
\item[$iii)$] The patch $  (X-w_{i})\cap B_{R +
s_{0}-2\epsilon}(0)$ is not a sub-patch of $P_{j}$, for every $1
\leq j \leq n$, $j \neq i$.
\end{itemize}

\end{lemma}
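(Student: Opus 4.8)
The plan is to attach to each $P_i$ a vector $u_i\in\RR^d$ that is an occurrence of the $R$-patch $Q:=\pi(X)\cap B_R(0)$ of $\pi(X)$, to show that the $u_i$ are pairwise distinct, pairwise at distance at least $R/(11L)$, and all contained in a ball of radius roughly $2LR$, and then to conclude by a packing estimate. First set $Y=\pi(X)$. Since $Y\in\Omega_{X'}$, it is a non periodic Delone set, shares its atlas with $X'$, and $(\Omega_Y,\RR^d)=(\Omega_{X'},\RR^d)$ is a topological factor of $(\Omega_X,\RR^d)$ through $\pi$; moreover $X$ itself is non periodic, since $X-v=X$ forces $Y-v=\pi(X-v)=Y$ and hence $v=0$. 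Therefore Lemma~\ref{constantequeminora}, applied with $Y$ in the role of $X'$, furnishes a constant $M$ such that for all $R\ge M$ every nonzero return vector of an $R$-patch of $Y$ has norm at least $R/(11L)$. I would let $R_\pi$ be the maximum of $M$, of the constant $R_\varepsilon$ of Lemma~\ref{semi-sliding-block-code} attached to this $\varepsilon$, and of a constant large enough for the final estimate to close; fix $R>R_\pi$ and patches $P_1,\dots,P_n$ satisfying $(i)$–$(iii)$.

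The central step is to reposition the $P_i$ near the origin. Each $P_i$ has diameter at most $2(R+s_0)$, so by linear repetitivity of $X$ with constant $L$ the ball $B_{2L(R+s_0)}(0)$ meets $X$ in a set containing a translated copy of $P_i$; equivalently there is $\tilde w_i$ with $\|\tilde w_i\|\le 2L(R+s_0)$ and $(X-\tilde w_i)\cap B_{R+s_0}(0)=P_i$. Since conditions $(ii)$ and $(iii)$ refer only to the patterns $P_i$ and to the sub-patterns $P_i\cap B_{R+s_0-2\varepsilon}(0)$, not to the vector realizing $P_i$, we may replace $w_i$ by $\tilde w_i$ and assume $\|w_i\|\le 2L(R+s_0)$ for all $i$. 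Applying $(ii)$ with $X''=X-w_i\in\Omega_X$ and using $\pi(X-w_i)=Y-w_i$, we obtain $v_i\in B_\varepsilon(0)$ with $(Y-(w_i+v_i))\cap B_R(0)=Y\cap B_R(0)$; thus $u_i:=w_i+v_i$ is an occurrence of $Q$ and $\|u_i\|\le 2L(R+s_0)+\varepsilon$.

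Now suppose $u_i=u_j$ for some $i\ne j$. Then $\|w_i-w_j\|=\|v_j-v_i\|<2\varepsilon$, so, writing $a=w_j-w_i$, one has $(X-w_i)\cap B_{R+s_0-2\varepsilon}(0)=\bigl((X-w_j)\cap B_{R+s_0-2\varepsilon}(-a)\bigr)+a$, and since $\|a\|<2\varepsilon$ the set $(X-w_j)\cap B_{R+s_0-2\varepsilon}(-a)$ is a sub-patch of $P_j$; hence its translate $(X-w_i)\cap B_{R+s_0-2\varepsilon}(0)$ is a sub-patch of $P_j$, contradicting $(iii)$. So the $u_i$ are pairwise distinct. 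For $i\ne j$, both $u_i,u_j$ are occurrences of $Q$, so with $Z:=Y-u_j\in\Omega_{X'}$ we have $(Z-(u_i-u_j))\cap B_R(0)=Z\cap B_R(0)=Q$, whence $u_j-u_i$ is a nonzero return vector of the $R$-patch $Q$ of $Z$; as $Z$ has the same atlas as $Y$, Lemma~\ref{constantequeminora} gives $\|u_i-u_j\|\ge R/(11L)$. Consequently the balls $B_{R/(22L)}(u_i)$ are pairwise disjoint and contained in $B_{\rho+R/(22L)}(0)$ with $\rho=2L(R+s_0)+\varepsilon$, so a volume comparison yields $n\le\bigl(1+22L\rho/R\bigr)^d=\bigl(1+44L^2+(44L^2s_0+22L\varepsilon)/R\bigr)^d$. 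Since $55L^2-(1+44L^2)=11L^2-1\ge 10$, choosing $R_\pi$ so large that $(44L^2s_0+22L\varepsilon)/R<11L^2-1$ for $R>R_\pi$ gives $n\le(55L^2)^d$.

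The hard part, and the step I would pin down most carefully, is the repositioning: one must verify that conditions $(ii)$ and $(iii)$ are properties of the patterns $P_i$ alone, so that linear repetitivity of $X$ can bring the $w_i$ into a ball of radius $O(LR)$ — otherwise the $u_i$ could be spread arbitrarily far apart and no bound on $n$ independent of $R$ would exist. A related subtlety is that ``sub-patch'' in $(iii)$ must be read up to translation (as in the proof of Lemma~\ref{LR}); this reading is exactly what makes both the repositioning and the distinctness of the $u_i$ go through.
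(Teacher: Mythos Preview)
Your argument is correct and follows essentially the same route as the paper's proof: use linear repetitivity to relocate each $P_i$ so that its center $w_i$ lies in a ball of radius $O(LR)$, apply condition~$(ii)$ to $X-w_i$ to obtain an occurrence $u_i=w_i+v_i$ of the $R$-patch $Q=\pi(X)\cap B_R(0)$, invoke Lemma~\ref{constantequeminora} to separate distinct $u_i$ by at least $R/(11L)$, and finish with a packing estimate, using condition~$(iii)$ to rule out collisions $u_i=u_j$. The only cosmetic differences are that the paper simplifies $2L(R+s_0)$ to $4LR$ via $R>s_0$ and organizes the contradiction with~$(iii)$ at the end rather than proving distinctness of the $u_i$ first; your explicit remark that ``sub-patch'' in~$(iii)$ must be read up to translation is well taken and matches the paper's usage.
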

\begin{proof}
Let $0<\varepsilon<\frac{s_0}{2}$,  $R_{\pi} = \max \{s_{0},
M_{X'}, R_{\epsilon} \}$ and $R> R_{\pi}$, where $M_{X'}$ is the
constant given by Lemma \ref{constantequeminora} and
$R_{\epsilon}$ by Lemma \ref{semi-sliding-block-code}.  Let
$P_{1}, \ldots, P_{n}$ be $n$ patches of $X$ satisfying the
conditions $i),\ ii),\ iii)$.

Let $1 \leq i \leq n$. We have
$$\diam(P_{i}) \leq 2(R + s_{0}) \leq 4R.$$
Linear repetitivity implies  there exists $v_{i} \in B_{4LR}(0)$
such that $$(X-v_{i}) \bigcap B_{R+s_{0}}(0) = P_{i}.$$ Then by
$ii)$, there is $u_{i} \in B_{\epsilon}(0)$ satisfying
$$
Q = (\pi(X-v_{i})+u_{i}) \cap B_{R}(0) = (\pi(X) -v_{i}+u_{i})
\cap B_{R}(0),
$$
where $Q=\pi(X)\cap B_R(0)$ (observe that $Q$ does not depend on
$i$). This means the set $Q+v_{i}-u_{i}$ is a patch of $\pi(X)$.
As $\{v_{i}-u_{i},\ 1 \leq i \leq n \}$ is included in $B_{4LR+
\epsilon}(0)$ and  $R
>M_{X'}$,   Lemma \ref{constantequeminora}  implies  the
number of elements in $\{v_{i}-u_{i},\ 1 \leq i \leq n \}$ is
bounded by
$$
\frac{\vol( B_{4LR+ \epsilon}(0))}{\vol\left(
B_{\frac{R}{11L}}(0)\right)} \leq (55L^2)^d.
$$

If $n$ is  greater than $(55L^2)^d$, then there exist $i\neq j$
such that $ v_{i}-u_{i} = v_{j}-u_{j}$, and $\vert \vert v_{i}
-v_{j}\vert \vert < 2\epsilon$. This implies  the patch
$(X-v_{i})\cap B_{R+s_{0}-2\epsilon}(0)$ is included in the patch
$(X-v_{j})\cap B_{R+s_{0}}(0) =P_{j} $, which contradicts the
condition $iii)$.
\end{proof}

The next result was proven in \cite{Du1} for subshifts. We use it
with Proposition \ref{igualdad} to conclude the proof of the main
theorem.

\begin{prop}
\label{finite-to-one0} Let $X$ be a linearly repetitive Delone set
with constant $L$. If $\pi:(\Omega_{X},\RR^d)\to
(\Omega_{X'},\RR^d)$ is a factor map such that $X'$ is a non
periodic Delone set, then $\pi$ is finite-to-one with constant
$(55L^2)^d$.
\end{prop}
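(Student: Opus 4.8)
The plan is to argue by contradiction, the key tool being Lemma~\ref{nbrepreimage}. Fix $Y\in\Omega_{X'}$, and assuming $|\pi^{-1}(\{Y\})|>(55L^2)^d$, choose finitely many pairwise distinct Delone sets $Z_1,\dots,Z_N\in\pi^{-1}(\{Y\})$ with $N>(55L^2)^d$. Since $X$ is linearly repetitive it is repetitive, so $(\Omega_X,\RR^d)$ is minimal; hence $\Omega_{Z_1}=\Omega_X$, the Delone set $Z_1$ is linearly repetitive with the same constant $L$, and $(\Omega_{X'},\RR^d)$, being a factor of a minimal system, is minimal, so $\Omega_Y=\Omega_{X'}$ and $Y=\pi(Z_1)$ is non periodic (as $X'$, hence every element of $\Omega_{X'}$, is non periodic). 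Replacing $(X,X')$ by $(Z_1,Y)$ changes neither $\Omega_X$, nor $\Omega_{X'}$, nor $\pi$, nor the constant $s_0$ of Lemma~\ref{semi-sliding-block-code}; so we may assume $Y=\pi(X)$, i.e.\ $Z_1=X$. Finally fix $\varepsilon=s_0/4$, let $R_\pi$ be the constant produced by Lemma~\ref{nbrepreimage} for this $\varepsilon$, and let $R_0$ be a relative-density constant, so that every element of $\Omega_X$ is an $(r_0,R_0)$-Delone set.

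The core of the argument is to show that for all sufficiently large $R$ the $N$ patches $P_k:=Z_k\cap B_{R+s_0}(0)$, $1\le k\le N$, satisfy conditions $i)$, $ii)$, $iii)$ of Lemma~\ref{nbrepreimage}; this forces $N\le(55L^2)^d$, contradicting $N>(55L^2)^d$, and since $Y$ was arbitrary it follows that $\pi$ is finite-to-one with constant $(55L^2)^d$. Condition $i)$ is immediate from minimality: all elements of $\Omega_X$ have the same atlas, so a patch of $Z_k$ centered at the origin is, after translating the origin to a point of $Z_k$ within distance $R_0$, a translate of an atlas patch of $X$, which rewrites $P_k$ as $(X-w_k)\cap B_{R+s_0}(0)$ for a suitable $w_k$. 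Condition $ii)$ follows from Lemma~\ref{semi-sliding-block-code} applied to $Z_k$ (legitimate since $R>R_\pi\ge R_\varepsilon$): if $X''\in\Omega_X$ and $X''\cap B_{R+s_0}(0)=P_k$, then $(\pi(X'')-v)\cap B_R(0)=\pi(Z_k)\cap B_R(0)=Y\cap B_R(0)=\pi(X)\cap B_R(0)$ for some $v\in B_\varepsilon(0)$.

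The remaining point, condition $iii)$, is the main obstacle: because being a sub-patch allows the ball to be recentered, it does not suffice that the patches $Z_k\cap B_\rho(0)$ be merely distinct. Put $\eta=\min_{k\ne j}d(Z_k,Z_j)>0$ and $\rho=R+s_0-2\varepsilon$, and suppose that for some $i\ne j$ the patch $Z_i\cap B_\rho(0)$ equals a sub-patch $Z_j\cap B_\sigma(y)$ of $P_j$. The common point set $A:=Z_i\cap B_\rho(0)=Z_j\cap B_\sigma(y)$ is $R_0$-relatively dense throughout $B_\rho(0)$, and at the same time it is exactly the set of points of $Z_j$ lying inside $B_\sigma(y)$; confronting these two facts with relative density forces $\|y\|\le 2R_0$ and $|\sigma-\rho|\le 2R_0$, hence $B_{\rho-4R_0}(0)\subseteq B_\sigma(y)\cap B_\rho(0)$. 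Therefore $Z_i$ and $Z_j$ agree on $B_{\rho-4R_0}(0)$, so by definition of the metric $d(Z_i,Z_j)\le 1/(\rho-4R_0)$, which is $<\eta$ as soon as $\rho-4R_0>\max\{\sqrt2,1/\eta\}$ --- contradicting the definition of $\eta$. Consequently, for every $R$ exceeding $R_\pi$ and the threshold $\max\{\sqrt2,1/\eta\}+4R_0+2\varepsilon-s_0$, the patches $P_1,\dots,P_N$ satisfy conditions $i)$--$iii)$ simultaneously, producing the contradiction that completes the proof.
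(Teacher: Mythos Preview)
Your overall strategy differs from the paper's, and the difference hides a genuine gap. You try to verify all three hypotheses of Lemma~\ref{nbrepreimage} for the patches $P_k=Z_k\cap B_{R+s_0}(0)$ and then read off $N\le(55L^2)^d$. The paper does \emph{not} do this: it verifies only $i)$ and $ii)$, deduces that the (intended) condition~$iii)$ must fail for every large $R$, and then runs a pigeonhole/compactness argument on the resulting small translations $v_R$ to produce a nonzero period of $X'_0$, contradicting non-periodicity.

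The gap is in your treatment of $iii)$. If you look at the proof of Lemma~\ref{nbrepreimage}, what is actually derived when $n>(55L^2)^d$ is the existence of $i\neq j$ with $\|v_i-v_j\|<2\varepsilon$; the operative consequence (made explicit in the proof of Proposition~\ref{finite-to-one0}) is
\[
(X-w_i)\cap B_{R+s_0-2\varepsilon}(0)=\bigl((X-w_j)+v\bigr)\cap B_{R+s_0-2\varepsilon}(0)\quad\text{for some }v\in B_{2\varepsilon}(0),
\]
i.e.\ agreement \emph{up to a translation of size $<2\varepsilon$}. Your verification of $iii)$ only rules out the literal equality $Z_i\cap B_\rho(0)=Z_j\cap B_\sigma(y)$, from which you (correctly) extract $Z_i\cap B_{\rho-4R_0}(0)=Z_j\cap B_{\rho-4R_0}(0)$ and hence $d(Z_i,Z_j)\to0$. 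But with the translation $v$ present you only get $Z_i\cap B_\rho(0)=(Z_j+v)\cap B_\rho(0)$, which gives at best $d(Z_i,Z_j)\le 2\varepsilon=s_0/2$. Since $\varepsilon$ is fixed and you have no control making $s_0/2<\eta$, you cannot obtain a contradiction with $\eta=\min_{k\neq j}d(Z_k,Z_j)$. In short, you have verified a version of $iii)$ that is weaker than what the lemma's proof needs, so the bound $N\le(55L^2)^d$ does not follow from your hypotheses.

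What is missing is exactly the endgame the paper supplies: from the failure of the translation form of $iii)$ at each $R$, use pigeonhole on the index pair $(i(R),j(R))$ and compactness of $\overline{B_{2\varepsilon}(0)}$ to pass to a limit $v$, obtain $Z_{i_0}=Z_{j_0}+v$, and then apply $\pi$ to get $Y=Y+v$ with $v\neq0$, contradicting non-periodicity. Your argument can be repaired by inserting this step in place of the attempt to verify $iii)$ directly.
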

\begin{proof}
Let $X'_{0} \in \Omega_{X'}$. Suppose there exist $n> (55L^2)^d$
elements $X_{1}, \ldots, X_{n}$ of $\Omega_{X}$,  such that $\pi
(X_{i}) = X'_{0}$, for each $1 \leq i \leq n$. Since they are all
different, there exists $R_{0}>0$ such that for any $R \geq
R_{0}$, the patches
$X_{i}\cap B_{R}(0)$ are pairwise distinct. \\
Let $0<\varepsilon< \frac{s_0}{2}$ and  $R_{\pi}$ be  the constant
given by Lemma \ref{nbrepreimage}. Lemma
\ref{semi-sliding-block-code} ensures that for any  $Y \in
\Omega_{X}$ verifying $Y \cap B_{R}(0) = X_{i}\cap B_{R}(0)$, with
$1\leq i\leq n$    and $R> \max \{R_{0}, R_{\epsilon} +s_{0},
R_{\pi}+s_{0}\}$, there exists $v \in B_{\epsilon}(0)$ such that
$(\pi(Y)-v) \cap B_{R-s_{0}}(0) = X'_{0} \cap B_{R-s_{0}}(0)$.
This means the patches $X_{1}\cap B_{R}(0), \cdots,X_{n}\cap
B_{R}(0)$ satisfy conditions $i)$ and $ii)$ of Lemma
\ref{nbrepreimage}. Then we deduce there exist different $i(R)$
and $j(R)$ in $\{1, \ldots,n \}$ such that the patch $X_{i(R)}\cap
B_{R-2\epsilon}(0)$  is a sub-patch of $X_{j(R)}\cap B_{R}(0)$. In
other words, there exists $v_{R} \in B_{2\epsilon}(0)$ such that
$X_{i(R)} \cap B_{R-2\epsilon}(0) = (X_{j(R)} +v_{R}) \cap B_{R-2\epsilon} (0)$. \\
By the pigeonhole principle, there exist different $i_{0}$ and
$j_{0}$ in $\{1, \ldots, n\}$, and an increasing sequence
$(R_{p})_{p\geq 0}$, tending to $\infty$ with $p$, such that
$i(R_{p}) = i_{0}$ and $j(R_{p})=j_{0}$, for every $p\geq 0$. By
compactness, we can also assume that $(v_{R_{p}})_{p\geq 0}$
converges   to a vector $v$. Thus, for every $p\geq 0$ we get
$$
X_{i_{0}} \cap B_{R_{p}-2\epsilon}(0) = (X_{j_{0}}+v_{R_{p}}) \cap
B_{R_{p}-2\epsilon}(0),
$$
which implies that $X_{i_{0}} = X_{j_{0}} +v$ and $X'_{0} =
\pi(X_{i_{0}}) = \pi(X_{j_{0}} +v) = X'_{0} +v$. Since $X_{i_{0}}
\neq X_{j_{0}}$, the vector $v$ is different from zero, but this
contradicts
the non periodicity of $X'_{0}$. 
\end{proof}

The following proposition is a straightforward generalization of
Lemma 21 in \cite{Du1}.

\begin{prop}
\label{igualdad}  Let  $(\Omega,\RR^d)$ be a minimal Delone system
and $\phi_1:(\Omega,\RR^d)\to(\Omega_1,\RR^d)$,
$\phi_2:(\Omega,\RR^d)\to(\Omega_2,\RR^d)$ be two factor maps.
Suppose that $(\Omega_2,\RR^d)$ is non periodic and  $\phi_1$ is
finite-to-one. If there exist $X,Y\in \Omega$ and $v\in \RR^d$
such that $\phi_1(X)=\phi_1(Y)$ and $\phi_2(X)=\phi_2(Y-v)$, then
$v=0$.
\end{prop}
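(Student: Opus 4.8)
The plan is to derive a contradiction from the assumption $v \neq 0$ by exploiting finite-to-oneness of $\phi_1$ together with minimality, in order to produce infinitely many distinct preimages of a single point under $\phi_1$.

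First I would observe that since $\phi_1(X) = \phi_1(Y)$ and $\phi_2(X) = \phi_2(Y - v)$, the pair $(X, Y)$ witnesses a kind of ``shift compatibility'' that can be iterated. The idea is to translate: for any $w \in \RR^d$, applying the $\RR^d$-action gives $\phi_1(X - w) = \phi_1(Y - w)$ and $\phi_2(X - w) = \phi_2(Y - v - w)$. By minimality of $(\Omega, \RR^d)$, the orbit of $X$ is dense, so I can choose a sequence of translation vectors $w_n$ such that $X - w_n$ converges to some point $Z \in \Omega$; passing to a subsequence, $Y - w_n$ converges to some $Z' \in \Omega$ and $Y - v - w_n$ converges to $Z' - v$. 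Continuity of $\phi_1$ and $\phi_2$ then yields $\phi_1(Z) = \phi_1(Z')$ and $\phi_2(Z) = \phi_2(Z' - v)$, so the relation is preserved under taking orbit-closure limits.

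The heart of the argument is an iteration. Starting from $\phi_2(X) = \phi_2(Y - v)$ and using that $\phi_1(X) = \phi_1(Y)$, I want to build, for each $k \geq 1$, points $X_k, Y_k \in \Omega$ with $\phi_1(X_k) = \phi_1(Y_k)$ and $\phi_2(X_k) = \phi_2(Y_k - kv)$. The mechanism: given $X_{k-1}, Y_{k-1}$ with $\phi_2(X_{k-1}) = \phi_2(Y_{k-1} - (k-1)v)$, I use density of orbits to approximate $Y_{k-1}$ by translates $X - w$ of $X$; since $\phi_1(X - w) = \phi_1(Y - w)$, in the limit I obtain a point $\widetilde{Y}$ with $\phi_1(Y_{k-1}) = \phi_1(\widetilde{Y})$ and such that $\widetilde{Y}$ plays the role of $Y$ relative to $Y_{k-1}$, i.e. $\phi_2(Y_{k-1}) = \phi_2(\widetilde{Y} - v)$. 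Setting $X_k = \widetilde{Y} - v + \text{(appropriate translate)}$ and chaining the $\phi_2$-equalities gives $\phi_2(X_k) = \phi_2(X_{k-1}) $ shifted by one more copy of $v$, while the $\phi_1$ side stays constant along a fiber. The upshot is that $\phi_1$ is constant on the set $\{X_k : k \geq 0\}$ (or on a set mapping to a single $\phi_1$-value after a further limit), but the points $X_k$ are forced to be pairwise distinct: if $X_k = X_\ell$ with $k \neq \ell$, then comparing the $\phi_2$-relations gives $\phi_2(\text{some point}) = \phi_2(\text{that point} - (k-\ell)v)$, and following this through with the fact that $\phi_1$ collapses things into finitely many fibers while $\phi_2$ detects the nonzero translation $(k - \ell)v$, one reaches a periodicity statement $Z_2 = Z_2 - (k-\ell)v$ for some $Z_2 \in \Omega_2$, contradicting non periodicity of $(\Omega_2, \RR^d)$.

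The main obstacle will be bookkeeping the limits carefully so that the iteration genuinely produces infinitely many distinct elements in a single $\phi_1$-fiber, thereby contradicting finite-to-oneness of $\phi_1$. Concretely, the delicate point is to arrange all the $X_k$ to lie in \emph{one} fiber $\phi_1^{-1}(\{\text{pt}\})$ rather than merely in fibers over a sequence of points; this requires either re-centering by translations at each stage (and taking a diagonal subsequential limit) or observing that the original $\phi_1(X)$ is already fixed and that each approximation step can be performed relative to that fixed value. Once the $X_k$ are in a common finite fiber, the pigeonhole principle forces $X_k = X_\ell$ for some $k \neq \ell$, and then the $\phi_2$-side yields $v' := (k-\ell)v$ as a period of $\phi_2(X_k) \in \Omega_2$, hence $v' = 0$, hence $v = 0$. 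I expect the translation-compatibility of factor maps (used repeatedly) and the orbit density from minimality to be exactly the tools that make the limits go through, much as in the symbolic case of \cite{Du1}, with the only genuinely new ingredient being that no sliding-block-code structure is available, so all approximations must be done through the $\RR^d$-action and continuity alone.
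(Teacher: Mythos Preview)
Your proposal is correct and follows essentially the same approach as the paper: iterate via minimality and compactness to build a sequence lying in a single $\phi_1$-fiber whose $\phi_2$-images differ by successive translates of $v$, then apply pigeonhole and non-periodicity. The paper's execution is slightly cleaner in that it tracks a single sequence $(Y_n)$---with $Y_1=Y$ and $Y_{n+1}$ any subsequential limit of $(Y_n-v_i)_i$ for a fixed sequence $v_i$ with $X-v_i\to Y$---so that $\phi_1(Y_n)=\phi_1(Y_{n+1})$ holds at each step and the ``delicate point'' you flag (getting everything into one fiber) is automatic by transitivity.
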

\begin{proof} There exists a sequence $(v_{i})_{i\in \NN} \subset \RR^d$ such that $\lim_{i\to +\infty}X- v_{i} =Y$. By compactness, we can suppose that
the sequence $(Y-v_{i})_{i\in \NN}$ converges to a point $Y_{2}
\in \Omega$. By continuity, we have $\phi_{1}(Y)=
\phi_{1}(Y_{2})$, and $\phi_{2}(Y) = \phi_{2}(Y_{2})-v$. By
compactness, we can suppose that the sequence of points
$(Y_{2}-v_i)_{i\in \NN} \subset \Omega$ converges to a point
$Y_{3}$. So we have $\phi_{1}(Y_{2}) = \phi_{1}(Y_{3})$ and $
\phi_{2}(Y_{2}) = \phi_{2}(Y_{3})-v$. Hence we construct by
induction a sequence $(Y_{n})_{n\in\NN} \subset \Omega$ such that
$\phi_{1}(Y_{n}) = \phi_{1}(Y_{n+1})$ and $ \phi_{2}(Y_{n}) =
\phi_{2}(Y_{n+1})-v$ for all $n \geq 1$. Since the map $\phi_{1}$
is finite-to-one, there exist $i<j$  such that $Y_{i}=Y_{j}$.
Then, we have
\begin{align*}
\phi_{2}(Y_{i})= \phi_{2}(Y_{i+1})-v = \phi_{2}(Y_{i+2})-2v =
\ldots & =  \phi_{2}(Y_{j})-(j-i)v\\ & =  \phi_{2}(Y_{i})-(j-i)v.
\end{align*}
Since  $(\Omega_{2},\RR^d)$ is non periodic, we conclude   $v=0$.
\end{proof}

\subsection*{Remark} Following the lines of the proof of
Proposition \ref{igualdad}, this result can be generalized to
$\ZZ^d$ or $\RR^d$ actions, more precisely:  Let $G$ be $\RR^d$ or
$\ZZ^d$. Let $(X,G)$ be a minimal dynamical system and
$\phi_1:(X,G)\to(X_1,G)$, $\phi_2:(X,G)\to(X_2,G)$ be two factor
maps. Suppose that $(X_2,G)$ is free and  $\phi_1$ is
finite-to-one. If there exist $x,y\in X$ and $g\in G$ such that
$\phi_1(x)=\phi_1(y)$ and $\phi_2(x)=\phi_2(g.y)$, then $g$ is the
identity in $G$.

\section{Number of factors of linearly repetitive Delone systems.}
\label{Number_of_factors_of_linearly_repetitive_Delone_systems}
Let $X$ be a Delone set of finite type, and  $P=X\cap B_R(x)$ be a
patch of $X$. We define
$$
X_P=\{v\in \RR^d: P+v \mbox{ is a patch of } X \}.
$$
Observe that $0$ always belongs to  $X_{P}$. It is straightforward
to check that $X_P$ is a Delone set when $X$ is repetitive.
Furthermore, $X_{P}$ is a Delone set of finite type because of
$X_P-X_{P} \subset X-X$. Then we define the {\em Vorono\"{\i} cell
of} $P$ associated to $v\in X_P$ as  the Vorono\"{\i} cell of
$v+x\in X_P+x$. That is,
$$
V_{P,v}=\{y\in \RR^d: \|y-(x+v)\|\leq \|y-(x+u)\|, \forall u \in
X_P \}.
$$
Notice  the Vorono\"{\i} cell of $P$ associated to $v\in X_P$ is
the Vorono\"{\i} cell of $v\in X_P$ translated by the vector $x$.


\begin{remark}
\label{important} {\rm It follows from  the definition that a
$(r,R)$-Delone set $X$ satisfies the following: for any $x\in X$,
the diameter of the Vorono\"{\i} cell $V_x$ is smaller or equal to
$2R$ and $B_{\frac{r}{2}}(x)$ is contained in $V_x$. If $X$ is
linearly recurrent with constant $L$, Lemma
\ref{constantequeminora} implies for every sufficiently large $R$
and every  patch $P=X\cap B_R(x)$ of $X$, the collection $X_P$ is
a $(\frac{R}{11L},2LR)$-Delone set. Therefore, in this instance we
have $\diam(V_{P,v})\leq 4LR$ and $B_{\frac{R}{11L}}(x+v)\subseteq
V_{P,v}$, for every $v\in X_P$.}
\end{remark}

In the next lemma, we bound the number of ways we can prolong a
given patch $P$ to a bigger one. More precisely, this gives an
upper bound of the number (up to translation) of $R'$-patches
$X\cap B_{R'}(x)$, such that $X\cap B_R(x)$ is a translated of
$P$.

\begin{lemma}
\label{voronoi_new} Let $X$ be a linearly repetitive Delone set
with constant $L$, and consider $0<R_1<R_2$, with $R_1$
sufficiently large. Then there are at most $n\leq
(44L^2)^d\left(\frac{R_2}{R_1} \right)^d$ patches $P_1,\cdots,
P_n$ of $X$, up to translation, satisfying for every $1\leq i\leq
n$ the following two conditions:
\begin{enumerate}
\item[$i)$] there exists $v_i\in\RR^d$ such that $P_i=X\cap
B_{R_2}(v_i)$.

\item[$ii)$] $(X-v_i)\cap B_{R_1}(0)=(X-v_j)\cap B_{R_1}(0)$, for
every $1\leq j\leq n$.
\end{enumerate}
\end{lemma}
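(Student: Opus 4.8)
The plan is to reduce the counting of "prolongations" of the fixed $R_1$-patch to a packing argument for Vorono\"i cells, exactly in the spirit of Remark \ref{important}. Fix an $R_1$-patch; among all the $P_i$ satisfying $i)$ and $ii)$ we may, after translating, assume they all contain the origin's neighbourhood in the sense that $(X-v_i)\cap B_{R_1}(0)$ equals a common patch $Q$. Write $x_i$ for a distinguished point of $Q$ (say the point of $X-v_i$ nearest the origin, or simply $0$ if we arrange $0\in X-v_i$); then each $P_i-v_i$ is a patch of $X$ of the form $(X-v_i)\cap B_{R_2}(0)$ that restricts to $Q$ near the origin. The key observation is that since all these patches agree on $B_{R_1}(0)$ but are pairwise distinct, the translation vectors $v_i$ must be "spread out": two of them cannot be too close, because $X$ is non periodic and linearly repetitive, and by Lemma \ref{constantequeminora} a return vector of the $R_1$-patch $Q$ (which each $v_i-v_j$ essentially is) has norm at least $R_1/(11L)$. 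At the same time, because each $P_i$ is an $R_2$-patch of $X$ and they all occur as sub-patches of a fixed large patch provided by linear repetitivity, the $v_i$ can be taken to lie in a ball of radius comparable to $L R_2$.

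In detail, the steps I would carry out are: (1) Use condition $ii)$ to see that for each pair $i,j$, the vector $v_i-v_j$ is a return vector of the patch $Q=(X-v_i)\cap B_{R_1}(0)$ of $X$ — indeed $(X-v_i)+ (v_i-v_j)$ agrees with $X-v_j$ on $B_{R_1}(0)$ after recentring — so provided $R_1\ge M_X$ (this is where "$R_1$ sufficiently large" enters), Lemma \ref{constantequeminora} gives $\|v_i-v_j\|\ge R_1/(11L)$ whenever $v_i\ne v_j$; and distinctness of the $P_i$ up to translation forces the $v_i$ to be pairwise distinct modulo the period group, which is trivial by non periodicity, hence genuinely distinct. (2) Fix one index, say $i=1$, set $D=\diam(P_1)\le 2R_2$, and use linear repetitivity of $X$ with constant $L$: the patch $P_1$ has an occurrence inside every ball of radius $2LR_2$; more to the point, translate so that all the $v_i$ can be chosen with $\|v_i\|\le 2LR_2$ — concretely, each $P_i$ is determined by where its $R_2$-ball sits, and since they all extend the same $Q$, linear repetitivity applied to the patch $Q$ lets us place all occurrences of the relevant configurations inside a ball of radius $O(LR_2)$; I will use the bound $\|v_i-v_1\|\le 22 L R_2$ or similar after being careful with constants. (3) Apply the volume/packing estimate: the open balls $B_{R_1/(22L)}(v_i)$ are pairwise disjoint (radius half the minimal separation from step (1)) and all contained in a ball of radius $2LR_2+R_1/(22L)$ around $v_1$; comparing volumes gives
\[
n\le \frac{\mathrm{vol}\!\left(B_{2LR_2+\frac{R_1}{22L}}(0)\right)}{\mathrm{vol}\!\left(B_{\frac{R_1}{22L}}(0)\right)}
=\left(\frac{2LR_2+\frac{R_1}{22L}}{\frac{R_1}{22L}}\right)^{d}
=\left(44L^2\,\frac{R_2}{R_1}+1\right)^{d}
\le (44L^2)^d\left(\frac{R_2}{R_1}\right)^{d},
\]
the last inequality being valid because $L\ge 1$, $R_2>R_1$, so the "$+1$" is absorbed (one can also simply sharpen the earlier radius bound so the clean form holds). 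The constants here should be tracked to land exactly on $44L^2$; the precise bookkeeping of the radius of the enclosing ball versus the $11L$ in Lemma \ref{constantequeminora} is the only delicate point.

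The main obstacle I anticipate is making the translation-vector bound in step (2) precise enough to get the stated constant $(44L^2)^d$ rather than a worse one: I need that all the $v_i$, once we fix the common patch $Q$ they extend, can be brought into a ball whose radius is at most $2LR_2$ (so that together with the separation $R_1/(11L)$ from Lemma \ref{constantequeminora} the ratio is at most $22L^2\cdot(R_2/R_1)$, and the packing with half-radius balls yields $44L^2$). This is where linear repetitivity must be invoked with some care: the naive statement gives occurrences of $P_i$ in balls of radius $2L\diam(P_i)\le 4LR_2$, which is a factor $2$ too big, so one has to argue instead at the level of the $R_1$-patch $Q$ (whose occurrences in $X$ are what index the possible prolongations) and exploit that we only need the $v_i$ relative to each other, plus non periodicity, to pin them into a ball of radius $2LR_2$ — or, alternatively, accept a slightly larger enclosing ball and absorb the slack using $L\ge 1$ and $R_2/R_1>1$ as above. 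Once that radius is fixed, the disjointness of the balls $B_{R_1/(22L)}(v_i)$ is immediate from step (1), and the volume comparison is routine.
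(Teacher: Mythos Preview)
Your approach is essentially the paper's, but your ordering of steps (1) and (2) creates a small logical tangle that you should straighten out. In step (1) you apply Lemma \ref{constantequeminora} to the \emph{given} centers $v_i$ from condition~$i)$; those are indeed occurrences of $Q$, so the separation $\|v_i-v_j\|\ge R_1/(11L)$ holds, but it is useless because the $v_i$ are unbounded. In step (2) you then want to ``choose'' the $v_i$ inside $B_{2LR_2}(0)$ --- but once you replace $v_i$ by a new representative $v_i'=v_i+w_i$, the separation from step (1) no longer refers to these points, and you must argue again that the $v_i'$ are occurrences of $Q$ and are pairwise distinct (the latter coming from condition~$iii)$: if $v_i'=v_j'$ then $P_i$ and $P_j$ are translates). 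The paper does exactly this, in the correct order: first apply linear repetitivity to each $R_2$-patch $P_i$ (not to $Q$, as you suggest --- that would only give a radius $\sim LR_1$, which is the wrong scale) to obtain a translate $P_i+w_i$ inside $X\cap B_{2LR_2}(0)$; then observe that each $v_i+w_i$ is an occurrence of the common $R_1$-patch, so Lemma \ref{constantequeminora} gives $\|(v_i+w_i)-(v_j+w_j)\|\ge R_1/(11L)$; finally pack.

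Your concern about the ``$+1$'' in the volume bound is legitimate, and the paper in fact has the same sloppiness: it writes $\vol(B_{2LR_2})/\vol(B_{R_1/(22L)})$ rather than $\vol(B_{2LR_2+R_1/(22L)})/\vol(B_{R_1/(22L)})$. Since the constant $(44L^2)^d$ is not used sharply anywhere downstream, this is harmless; but if you want the clean inequality you should note that the occurrence centers $v_i+w_i$ actually lie in a ball of radius $2LR_2-R_2$ (the whole $R_2$-patch must fit inside $B_{2LR_2}(0)$), which absorbs the extra term once $R_2\ge R_1/(22L)$.
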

\begin{proof}
Applying Lemma \ref{constantequeminora} to the identity factor map
on $(\Omega_{X}, \RR^d)$, we deduce  there exists $M_{X}
>0$, such that  for every $R\geq M_{X}$ and $x\in \RR^d$, the distance between two different occurrences of $P=X\cap B_R(x)$ is greater or equal to
$R/(11L)$.

Let $M_X\leq R_1<R_2$ and $n\in \NN$. Suppose $P_1,\cdots, P_n$
are patches of $X$ verifying conditions $i)$ and $ii)$, and such
that for every $1\leq i\leq n$,
\begin{itemize}
\item[$iii)$] $P_i$ is not a translated of $P_j$, for every $j\in
\{1,\cdots,n\}\setminus\{i\}$.
\end{itemize}

Condition $i)$ and linear repetitivity of $X$ imply for every
$1\leq i\leq n$, there exists $w_i\in \RR^d$ such that $P_i+w_i$
is a sub-patch of $X\cap B_{2LR_2}(0)$. From condition $ii)$ it
follows that for every $1\leq i\leq n$, the point $v_i+w_i$ is an
occurrence of the patch $X\cap B_{R_1}(v_1)$ in the ball
$B_{2LR_2}(0)$. Finally, by the choice of $R_1$, conditions $ii)$,
$iii)$ and Lemma \ref{constantequeminora},  for every $i$ and $j$
in $\{1,\cdots, n\}$ such that $i\neq j$, we get
$\|v_i+w_i-(v_j+w_j)\|\geq \frac{R_1}{11L},$ which implies
$$
n\leq
\frac{\vol(B_{2LR_2}(0))}{\vol(B_{\frac{R_1}{22L}}(0))}=(44L^2)^d\left(
\frac{R_2}{R_1}\right)^d,
$$
and  achieves the proof.
\end{proof}

The following lemma is certainly well-known, but we did not find
any reference. This shows that a Voronoi cell of a point $x$ in a
$(r,R)$-Delone set $X$ is completely determined by the points in
$X\cap B_{4R}(x)$.

\begin{lemma}
\label{voronoi} Let $X$ be a $(r,R)$-Delone set. Then for every
$x\in X$ it holds
$$
V_x=\{y\in \RR^d: \|x-y\|\leq \|x'-y\|, \mbox{ for every } x'\in
X\cap B_{4R}(x) \}.
$$
\end{lemma}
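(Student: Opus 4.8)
The plan is to prove the two inclusions separately; write $W_x$ for the set on the right-hand side. Since the defining condition of $V_x$ quantifies over \emph{all} $x'\in X$, and in particular over the $x'\in X\cap B_{4R}(x)$, the inclusion $V_x\subseteq W_x$ is immediate. The content of the lemma is the reverse inclusion $W_x\subseteq V_x$: I must show that a point $y$ which is at least as close to $x$ as to every point of $X$ lying in the open ball $B_{4R}(x)$ is automatically at least as close to $x$ as to every point of $X$.

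The heart of the argument will be the a priori bound $\|x-y\|\le 2R$ for every $y\in W_x$, proved by contradiction using only relative density. Assuming $\|x-y\|>2R$, set $m=x+2R\,\frac{y-x}{\|x-y\|}$, the point of the segment $[x,y]$ at distance $2R$ from $x$ (which lies strictly inside $[x,y]$ by the assumption). By relative density the closed ball of radius $R$ around $m$ contains a point $x'\in X$. Then $\|x'-x\|\le\|x'-m\|+\|m-x\|\le R+2R=3R$, so $x'\in B_{4R}(x)$, while
$$\|x'-y\|\le\|x'-m\|+\|m-y\|\le R+(\|x-y\|-2R)=\|x-y\|-R<\|x-y\|,$$
contradicting $y\in W_x$. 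Hence $\|x-y\|\le 2R$.

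With this bound in hand, the conclusion is a single application of the triangle inequality: if $x'\in X\setminus B_{4R}(x)$, i.e. $\|x'-x\|\ge 4R$, then
$$\|x'-y\|\ge\|x'-x\|-\|x-y\|\ge 4R-2R=2R\ge\|x-y\|.$$
Together with the hypothesis $y\in W_x$, which handles the points of $X$ inside $B_{4R}(x)$, this gives $\|x-y\|\le\|x'-y\|$ for every $x'\in X$, that is, $y\in V_x$, and the proof is complete.

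I do not expect a genuine obstacle here: the only real choice is the intermediate radius, and $2R$ works precisely because $2R+R=3R$ stays below the cutoff $4R$ while the extra $R$ suffices to strictly gain distance toward $y$; note that only relative density (the constant $R$) is used, uniform discreteness playing no role. The one point to keep straight is the open/closed ball bookkeeping: $B_{4R}(x)$ is the \emph{open} ball, so ``$x'\notin B_{4R}(x)$'' means $\|x'-x\|\ge 4R$, which is exactly the inequality the final estimate consumes.
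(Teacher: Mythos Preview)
Your proof is correct and takes a genuinely different route from the paper's. The paper argues by convexity: it notes that the right-hand set $C_x$ is convex (an intersection of half-spaces), and that if $y\in C_x\setminus V_x$ then the segment $[x,y]$ must exit $V_x$ into a neighbouring Vorono\"{\i} cell $V_{x'}$; since both cells have diameter at most $2R$ (Remark~\ref{important}), their centers satisfy $\|x-x'\|\le 4R$, so the exit point $z$ violates the defining inequality of $C_x$, contradicting convexity. Your argument bypasses the Vorono\"{\i} structure entirely: you prove directly, using only relative density and a well-chosen point on the segment $[x,y]$, that $W_x\subseteq\overline{B_{2R}(x)}$, and then dispose of the far points of $X$ by the triangle inequality. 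Your approach is more self-contained (no appeal to adjacency of Vorono\"{\i} cells or to the diameter bound from Remark~\ref{important}) and in fact yields that diameter bound for $W_x$ as a by-product; the paper's approach is slightly shorter once those facts are taken for granted and makes the connection to the Vorono\"{\i} geometry more explicit.
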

\begin{proof}
Let $C_x=\{y\in \RR^d: \|x-y\|\leq \|x'-y\|, \mbox{ for every }
x'\in X\cap B_{4R}(x) \}$.

By definition of Vorono\"{\i} cell, the inclusion $V_x\subseteq
C_x$ is direct.

Observe the set $C_x$ is convex because is obtained as
intersection of convex sets. Now, suppose there exists $y\in
C_x\setminus V_x$. Then there exist $x'\in X$, verifying $V_x\cap
V_{x'}\neq \emptyset$, and $z\in ([x,y]\cap V_{x'})\setminus V_x$,
where $[x,y]$ is the segment with extreme points $x$ and $y$.
Since $\|x-x'\|\leq 4R$ and $\|z-x'\|<\|z-x\|$, definition of
$C_x$ implies $z\notin C_x$, which contradicts the convexity of
$C_x$.
\end{proof}

\begin{lemma}
\label{cotavoronoi} Let $X$ be a non periodic linearly repetitive
Delone set with constant $L$. There exists a positive constant
$c(L)$ such that for every  sufficiently large $R$ and every patch
$P=X\cap B_R(x)$, the collection $\{X\cap V_{P,v}: v\in X_P\}$
contains at most
 $c(L)$ elements up to translation.
\end{lemma}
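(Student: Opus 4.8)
The approach is to prove that, up to translation, the set $X\cap V_{P,v}$ is determined by the patch of $X$ of radius $\rho=(8L+1)R$ centered at the occurrence point $x+v$, and then to count those patches using Lemma~\ref{voronoi_new}. First I would take $R$ large enough for Remark~\ref{important} to apply, so that $X_P$ is a $(\tfrac{R}{11L},2LR)$-Delone set, $x+v\in V_{P,v}$ and $\diam(V_{P,v})\le 4LR$; in particular $V_{P,v}\subseteq B_{\rho}(x+v)$ for every $v\in X_P$.

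The heart of the argument is a local-determination claim: if $v,v'\in X_P$ satisfy $(X-(x+v))\cap B_{\rho}(0)=(X-(x+v'))\cap B_{\rho}(0)$, then $X\cap V_{P,v'}=(X\cap V_{P,v})+(v'-v)$. I would prove it using two nested finite-determination facts. First, for $\|u\|<8LR$ one has $v+u\in X_P$ if and only if $(X-(x+v))\cap B_R(u)=(P-x)+u$, and since $B_R(u)\subseteq B_\rho(0)$ this equivalence is decided by $(X-(x+v))\cap B_{\rho}(0)$ alone; hence the hypothesis gives $(X_P-v)\cap B_{8LR}(0)=(X_P-v')\cap B_{8LR}(0)$. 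Second, applying Lemma~\ref{voronoi} to the $(\tfrac{R}{11L},2LR)$-Delone set $X_P$ (so the relevant radius is $4\cdot 2LR=8LR$), the Vorono\"{\i} cell of $v$ in $X_P$ is given by a translation-equivariant formula depending only on $X_P\cap B_{8LR}(v)$, which together with the previous line yields $V_{P,v'}=V_{P,v}+(v'-v)$. Finally the hypothesis also forces $X$ and $X-(v'-v)$ to agree on $B_\rho(x+v)\supseteq V_{P,v}$, whence $X\cap V_{P,v'}=X\cap\bigl(V_{P,v}+(v'-v)\bigr)=(X\cap V_{P,v})+(v'-v)$.

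Granting the claim, the number of translation classes in $\{X\cap V_{P,v}:v\in X_P\}$ is bounded by the number of translation classes in $\{X\cap B_\rho(x+v):v\in X_P\}$. Every such patch has the same radius-$R$ core centered at its center, because $(X-(x+v))\cap B_R(0)=(X-x)\cap B_R(0)$ is independent of $v\in X_P$; so Lemma~\ref{voronoi_new} with $R_1=R$ and $R_2=\rho=(8L+1)R$ (again assuming $R$ large) bounds this number by $(44L^2)^d(8L+1)^d$, and one may take $c(L)=(44L^2)^d(8L+1)^d$. The main obstacle is the local-determination claim: one must compose the two finite-determination facts above while keeping careful track of translation-equivariance (so that counting genuine $\RR^d$-patches up to translation is legitimate) and verify that the single radius $\rho=(8L+1)R$ is simultaneously large enough for Remark~\ref{important}, for Lemma~\ref{voronoi} applied to $X_P$, and for enclosing $V_{P,v}$. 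Once this is in place, the counting step is an immediate application of Lemma~\ref{voronoi_new}.
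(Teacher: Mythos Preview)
Your proposal is correct and follows essentially the same approach as the paper's proof: both show that $X\cap V_{P,v}$ is determined, up to translation, by the $X$-patch of radius $\sim 8LR$ centered at $x+v$ (via Remark~\ref{important} and Lemma~\ref{voronoi}), and then invoke Lemma~\ref{voronoi_new} with $R_1=R$ and $R_2$ of that order to count such patches. The only difference is cosmetic: you spell out the local-determination claim more explicitly and use the slightly larger radius $\rho=(8L+1)R$ (to guarantee that the $X$-patch of radius $\rho$ already determines $X_P\cap B_{8LR}(x+v)$), yielding $c(L)=(44L^2)^d(8L+1)^d$ in place of the paper's $(352L^3)^d$.
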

\begin{proof}
Let $R$ be a big enough positive number, in order to apply Lemma
\ref{voronoi_new} to $R_1=R$ and $R_2=8LR$.

Let $x\in \RR^d$, $P=X\cap B_R(x)$ and $v\in X_P$. Since $X_P+x$
is a Delone set with constant of uniform density equal to $2LR$
(see Remark \ref{important}),  Lemma \ref{voronoi}  implies
$V_{P,v}$ is completely determined by the patch $X\cap
B_{8RL}(v+x)$. Furthermore, the Vorono\"{\i} cell $V_{P,v}$ is
contained in the ball $B_{4RL}(v+x)$ (see Remark \ref{important}).
Then it follows there are at most as many Vorono\"{\i} cells of
$P$ and patches of the kind $X\cap V_{P,v}$, up to translation, as
patches $Q$ satisfying the following two conditions: $i)$ there
exists $w\in \RR^d$ such that $Q=X\cap B_{8RL}(w)$  and $ii)$ $w$
is an occurrence of a translated of $P$. These two conditions and
Lemma \ref{voronoi_new} imply there are at most
$$
c(L)\leq (44L^2)^d\left(\frac{8LR}{R} \right)^d=(352L^3)^d
$$
patches of the kind $X\cap V_{P,v}$  up to translation.
\end{proof}


We have already defined the notion of return vector of a patch,
now let us define the notion of return vector of a Vorono\"{\i}
cell of a patch. For a patch $P=X\cap B_R(x)$ of $X$ and $v\in
X_P$, we say that $w\in \RR^d$ is a {\em return vector of}
$V_{P,v}\cap X$ if $(X-w)\cap V_{P,v}= X\cap V_{P,v}$. We set
$$P_{n,w,v} \mbox{ the patch }  (X-w-x-v)\cap B_{L^nR}(0).$$
Notice that $P_{n,w,v}+v+w+x$ is a patch of $X$. When there is no
confusion about $n$ and $v$, we write  $P_w$ instead of
$P_{n,w,v}$.

\begin{lemma}
\label{cotavoronoi3} Let $n\in \NN$ and  $X$ be a  non periodic
linearly repetitive Delone set with constant $L$. For every
sufficiently large $R>0$ and every $R$-patch $P$,  the collection
$\{P_w: w \mbox{ is a return vector of }V_{P,v}\cap X\}$ has at
most $c(n,L)$ elements, for every $v\in X_P$.
\end{lemma}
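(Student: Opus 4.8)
The plan is to reduce the count of distinct patches $P_{n,w,v}$ to a count of distinct patches of bounded radius, using linear repetitivity to trade "far-away information" for "nearby information". Fix a sufficiently large $R$, a patch $P = X\cap B_R(x)$, and $v\in X_P$. The key observation is that each return vector $w$ of $V_{P,v}\cap X$ is, in particular, an occurrence of the patch $X\cap V_{P,v}$ (that is, $(X-w)\cap V_{P,v} = X\cap V_{P,v}$), and the Vorono\"\i\ cell $V_{P,v}$ contains $B_{R/(11L)}(x+v)$ by Remark \ref{important}. The problem is that $w$ can be arbitrarily large, while $P_{n,w,v}$ only records the patch of radius $L^n R$ around $x+v+w$; so we must control how many translation-inequivalent patches of radius $L^n R$ can occur at occurrences of $X\cap V_{P,v}$.

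First I would record the basic containments: by Remark \ref{important}, $V_{P,v}\subseteq B_{4LR}(x+v)$, and by Lemma \ref{voronoi} applied to the $(R/(11L),2LR)$-Delone set $X_P + x$, the cell $V_{P,v}$ is completely determined by $X\cap B_{8LR}(x+v)$. Consequently, whether $w$ is a return vector of $V_{P,v}\cap X$ is determined by the patch $(X-w)\cap B_{8LR}(x+v)$; in fact, any $w$ with $(X-w)\cap B_{8LR}(x+v) = X\cap B_{8LR}(x+v)$ is a return vector. Next I would fix one return vector $w_0$ (for instance $w_0 = 0$) and, for an arbitrary return vector $w$, use linear repetitivity of $X$ to find a point $w'$ inside a controlled ball — of radius of order $L\cdot L^n R$, since $\diam\bigl((X-w-x-v)\cap B_{L^nR}(0)\bigr)\le 2L^nR$ — at which the patch $P_{n,w,v}$ reappears as $(X-w')\cap B_{L^nR}(x+v+w_0)$. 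The trick is that this relocated copy $w' + x + v$ is still an occurrence of a translated copy of $X\cap V_{P,v}$ (because $P_{n,w,v}$, for $R$ large enough that $L^nR \ge 8LR$, contains all the information needed to witness $V_{P,v}$ and the return property), so $w'$ lies among the occurrences of a fixed patch $X\cap B_{8LR}(x+v)$ within a ball of radius of order $L^{n+1}R$. Then Lemma \ref{constantequeminora} gives a lower bound of order $R/(11L)$ on the spacing between such occurrences, and a volume count bounds their number by a constant $c(n,L)$ of the form $(\mathrm{const}\cdot L^{n+2})^d$. Since two distinct patches $P_{n,w,v}$ cannot both relocate to the same $w'$ (they would then be equal), this bounds the number of distinct $P_{n,w,v}$.

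The main obstacle I expect is the bookkeeping in the relocation step: I must be careful that, after applying linear repetitivity to move the patch $P_{n,w,v}$ into a bounded ball, the moved copy still sits at an occurrence of the \emph{same} Vorono\"\i-cell patch $X\cap V_{P,v}$ rather than some translate, and that the return-vector property is preserved. This hinges on choosing $R$ large enough that $L^n R$ exceeds the radius ($8LR$) needed by Lemma \ref{voronoi} to pin down the cell, so that the radius-$L^nR$ patch genuinely encodes both the cell and the equality $(X-w)\cap V_{P,v}=X\cap V_{P,v}$; with that in hand the argument is a direct combination of Lemma \ref{constantequeminora} (spacing of occurrences), linear repetitivity (relocation into a ball of radius $\le 2L^{n+1}R$), and a volume estimate, yielding a bound $c(n,L)$ depending only on $n$, $L$, and $d$, as required.
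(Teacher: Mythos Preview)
Your approach is essentially the paper's, but you take a detour the paper avoids. The paper's proof is a two-line application of Lemma~\ref{voronoi_new}: since $V_{P,v}$ contains a ball $B_{cR}(x+v)$ (with $c$ of order $1/L$, coming from the uniform discreteness of $X_P$), every return vector $w$ satisfies $(X-w)\cap B_{cR}(x+v) = X\cap B_{cR}(x+v)$, so all the $P_w$ share a common $cR$-sub-patch at the origin; then Lemma~\ref{voronoi_new} with $R_1 = cR$ and $R_2 = L^n R$ immediately bounds the number of distinct extensions by $(44L^2)^d(L^n/c)^d$. Your relocation-plus-spacing-plus-volume argument is exactly how Lemma~\ref{voronoi_new} itself is proved, so you are re-deriving that lemma inside this one rather than invoking it.

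The extra step you propose --- verifying that the relocated point $w'$ is still a return vector of $V_{P,v}\cap X$, via the $8LR$-patch information encoded in $P_{n,w,v}$ --- is unnecessary, and it introduces a small error: the condition $L^n R \geq 8LR$ is equivalent to $L^{n-1}\geq 8$, which is a constraint on $n$ and $L$, not on $R$, so it \emph{cannot} be arranged by ``choosing $R$ large enough'' as you claim. For small $n$ (say $n=1$ with $L<8$) your argument as written does not cover the statement. The fix is immediate: drop the insistence on preserving the return-vector property after relocation and use instead the smaller common sub-patch of radius $cR$, which is always contained in $B_{L^nR}(0)$; then the spacing bound from Lemma~\ref{constantequeminora} applies to occurrences of that $cR$-patch, and the volume count goes through for every $n$.
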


\begin{proof}
Let $R_1=R$ and $R_2=L^nR$ be sufficiently large positive numbers
in order to apply Lemma \ref{voronoi_new}. Let $P=X\cap B_R(x)$ be
a patch of $X$ and $v\in X_P$. Since $X_P+x$ is a Delone set with
constant of uniform discreteness equal to $\frac{R}{11L}$, the
Vorono\"{\i} cell $V_{P,v}$ contains the ball
$B_{\frac{R}{22}}(v+x)$. This implies for every pair of return
vector $u$ and $w$ of $V_{P,v}$ it holds $P_w\cap
B_{\frac{R}{22}}(0)=P_u\cap B_{\frac{R}{22}}(0)$. Thus, from Lemma
\ref{voronoi_new} it follows there are at most
$$
c(n,L)\leq(44L^2)^d\left( \frac{L^{n}R}{\frac{R}{22L}}\right
)^d=(968L^{n+3})^d
$$
patches of the kind $P_w$.
\end{proof}

Let $n\in\NN$.   We call $M(n,L)$ the number of coverings of a set
with $c(L)c(n,L)$ elements, where $c(L)$ and $c(n,L)$ are the
constants of Lemma \ref{cotavoronoi} and Lemma \ref{cotavoronoi3}
respectively.

\begin{theo}
Let $X$ be a linearly repetitive Delone set with constant $L$.
There are finitely many Delone system factors of $(X,\RR^d)$  up
to conjugacy.
\end{theo}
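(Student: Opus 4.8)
The plan is to adapt to the Delone setting the strategy used for subshifts in \cite{Du1}, replacing the missing sliding-block-code structure by the Vorono\"{\i}-cell and return-vector bookkeeping developed above. Since every Delone system factor of $(\Omega_X,\RR^d)$ is either non periodic or periodic, and since the periodic Delone system factors are easily seen to be finitely many up to conjugacy (their period lattice must already be recognizable inside patches of $X$ of bounded diameter; note that for periodic targets Propositions \ref{finite-to-one0} and \ref{igualdad} do not apply, so this case is genuinely treated apart), I would concentrate on non periodic factors. So let $\pi:(\Omega_X,\RR^d)\to(\Omega_{X'},\RR^d)$ be a factor map with $X'$ non periodic; we may assume $\pi(X)=X'$. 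Recall $\Omega_X$ is minimal, since linear repetitivity implies repetitivity. By Proposition \ref{finite-to-one0}, $\pi$ is finite-to-one with the \emph{uniform} constant $(55L^2)^d$.

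The second step is to attach to $\pi$ a bounded amount of combinatorial data. Fix once and for all an integer $n=n(L)$ with $L^{n-1}\geq 5$, and fix $R$ large enough so that Lemmas \ref{voronoi_new}, \ref{cotavoronoi}, \ref{cotavoronoi3} and Remark \ref{important} apply and so that $4LR+s_0<L^nR$, where $s_0$ is the constant of Lemma \ref{semi-sliding-block-code}. Choose $x_0\in X$ and put $P=X\cap B_R(x_0)$; by minimality $P$ occurs in every element of $\Omega_X$. The occurrences of $P$ form the finite-type Delone set $X_P+x_0$, whose Vorono\"{\i} cells tile $\RR^d$: the tile carried by a return vector $w$ of $V_{P,v}\cap X$ is $V_{P,v}+w$, with centre $v+x_0+w$. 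Decorate this tile by the pair formed by the patch $X\cap V_{P,v}$ up to translation and by the patch $P_{n,w,v}$. By Lemmas \ref{cotavoronoi} and \ref{cotavoronoi3} there are at most $c(L)c(n,L)$ decorated tile types. Since $V_{P,v}\subseteq\overline{B_{4LR}(v+x_0)}$ and $4LR+s_0<L^nR$, Lemma \ref{semi-sliding-block-code}, used with a fixed $0<\varepsilon<s_0/2$, shows that $P_{n,w,v}$ determines $\pi(X)\cap(V_{P,v}+w)$, translated to the origin, up to a translation of norm $<\varepsilon$. Consequently $\pi$ determines which decorated tile types produce, on their tiles, the same local configuration of $\pi(X)$ up to such a small translation; this information is one of at most $M(n,L)$ coverings of the set of decorated tile types, a number depending only on $L$ and $d$.

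The heart of the proof is the rigidity assertion that this covering determines $\pi$ up to conjugacy of its target. Let $\pi_1:(\Omega_X,\RR^d)\to(\Omega_1,\RR^d)$ and $\pi_2:(\Omega_X,\RR^d)\to(\Omega_2,\RR^d)$ be two non periodic factor maps inducing the same covering of the set of decorated tile types. I would first prove that for all $Y,Z\in\Omega_X$ with $\pi_1(Y)=\pi_1(Z)$ there is some $v\in\RR^d$ with $\pi_2(Y)=\pi_2(Z-v)$. Granting this, Proposition \ref{igualdad}, applied with $\phi_1=\pi_1$ (finite-to-one by Proposition \ref{finite-to-one0}) and $\phi_2=\pi_2$ (non periodic target), forces $v=0$; hence $\pi_1(Y)=\pi_1(Z)$ implies $\pi_2(Y)=\pi_2(Z)$, so $\pi_2$ factors through $\pi_1$ via a factor map $\psi$, and by symmetry $\pi_1$ factors through $\pi_2$, whence $\psi$ is a conjugacy between $(\Omega_1,\RR^d)$ and $(\Omega_2,\RR^d)$. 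To produce such a $v$, I would compare, over balls of radius tending to infinity, the decorated Vorono\"{\i} tilings of $Y$ and of $Z$: from $\pi_1(Y)=\pi_1(Z)$ and the description of $\pi_1$ in terms of the common covering one reads off that these two decorated tilings are matched, up to the covering and up to bounded translations, over such balls; applying the same description for $\pi_2$ then forces $\pi_2(Y)$ and $\pi_2(Z)$ to agree over these balls up to a single translation of controlled norm, and a compactness argument extracts the desired $v$.

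Putting the pieces together, the conjugacy class of any non periodic Delone system factor of $(\Omega_X,\RR^d)$ is prescribed by one of at most $M(n,L)$ coverings, so there are at most $M(n,L)$ such factors up to conjugacy; together with the finitely many periodic factors, $(X,\RR^d)$ has finitely many Delone system factors up to conjugacy. The main obstacle is the rigidity step of the previous paragraph, and within it the careful management of the numerous $\varepsilon$-size translation ambiguities coming from Lemma \ref{semi-sliding-block-code} together with the fact that $Y$ and $Z$ need not carry the same Vorono\"{\i} tiling even when $\pi_1(Y)=\pi_1(Z)$: this is exactly the dissection of the continuity of factor maps through Vorono\"{\i} cells and return vectors announced in the introduction, and it is where essentially all the difficulty lies, the counting in the remaining steps being routine once Lemmas \ref{cotavoronoi} and \ref{cotavoronoi3} are available.
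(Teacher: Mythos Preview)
Your overall strategy---attach to each factor a relation/covering on a finite set of decorated Vorono\"{\i} tiles, bound its size by $M(n,L)$ via Lemmas \ref{cotavoronoi} and \ref{cotavoronoi3}, and then use Propositions \ref{finite-to-one0} and \ref{igualdad} to pass from ``same covering'' to ``conjugate''---is exactly the paper's approach. But there is a logical slip in how you package it. You write ``attach to $\pi$ a bounded amount of combinatorial data'' and later ``the conjugacy class \dots\ is prescribed by one of at most $M(n,L)$ coverings'', as if there were a well-defined map from factors to coverings. There is not: the scale $R$ at which you build the decorated tiles must satisfy $R>R_\varepsilon$ and $4LR+s_0<L^nR$, and both $s_0$ and $R_\varepsilon$ come from Lemma \ref{semi-sliding-block-code} and depend on $\pi$. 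So two different factors $\pi_1,\pi_2$ produce coverings of \emph{different} sets $\F$, and ``inducing the same covering'' has no meaning. The paper sidesteps this by reversing the quantifiers: it first takes any $M(n,L)+1$ non periodic factors $\pi_1,\dots,\pi_{M(n,L)+1}$, then chooses $R$ larger than $\max_i R_\varepsilon^{(i)}$, $\max_i s_0^{(i)}$, $\max_i M_{X_i}$, so that a single $\F$ serves for all of them; pigeonhole then gives $\R_i=\R_j$ for some $i\neq j$. This yields only ``at most $M(n,L)$ factors up to conjugacy'', not a canonical invariant, but that is all the theorem claims.

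Two further remarks. First, the paper (consistently with its title) proves the result only for non periodic factors; your parenthetical treatment of periodic factors is not in the paper, and the sentence ``their period lattice must already be recognizable inside patches of $X$ of bounded diameter'' is not an argument---a single nonzero period does not give a lattice, and even for fully periodic factors the finiteness of possible period lattices up to conjugacy requires justification. Second, you correctly flag the rigidity step as the crux, but you only sketch it. In the paper this is carried by two explicit claims: Claim 1 shows that $\pi_i(Y)=\pi_i(Z)$ forces the decorated patches of $Y$ and $Z-u_0$ (for some $u_0\in B_{4LR}(0)$) to be $\R_i$-related at every occurrence of $P$; Claim 2 is the delicate part, showing that the resulting local displacements $-u_s+t_s+w_s$ are in fact \emph{constant} in $s$, by comparing neighbouring Vorono\"{\i} cells and invoking Lemma \ref{constantequeminora} to kill small return vectors. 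Your ``compactness argument extracts the desired $v$'' is not a substitute for this: without Claim 2 the local translations could drift, and no single $v$ would exist.
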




\begin{proof}
Let $X$ be a non periodic linearly repetitive Delone set with
constant $L>1$. Let $n\in \NN$ be such that
\begin{equation}
\label{n_value} L^n-1-12L-176L^2>1,
\end{equation}
and let $R_1>1$ be a constant such that for every $R\geq R_1$,
Lemma \ref{cotavoronoi} and Lemma \ref{cotavoronoi3} are
applicable to $R$-patches of $X$.

For every $1\leq i\leq M(n,L)+1$, let $X_i$ be a non periodic
Delone set such that there exists a topological factor map
$\pi_i:\Omega_{X}\to \Omega_{X_i}$, and let $X_{0} = X$. Let
$M_{X_i}$ be the constant of  Lemma \ref{constantequeminora}
associated to $X_i$.

Fix $0<\varepsilon<1$. For every $1\leq i\leq M(n,L)+1$, consider
$R_{\varepsilon}^{(i)}$ and $s_0^{(i)}$ the constants of Lemma
\ref{semi-sliding-block-code} associated to $\pi_i$. We define
$$
R_{\varepsilon}= \max_{i}\{R_{\varepsilon}^{(i)}\},\,\, s_0 =
\max_{i}\{s_0^{(i)}\} \mbox{ and }  M = \max_{i}\{M_{X_i}\}.
$$
Observe  in an open ball of radius $r/22L$, there is at most one
return vector of a $r$-patch of $X_i$, with $r\geq M$, for every
$1\leq i\leq M(n,L)+1$.

We take
$$
R>\max\{R_{\varepsilon}, s_0, M+\varepsilon, R_1, 45L \},
$$

Consider the patch $P=B_R(0)\cap X$, and $v_1,\cdots, v_N\in X_P$
such that for every $v\in X_P$, there exist $1\leq i\leq N$ and
$u\in \RR^d$ verifying $V_{P,v}\cap X=(V_{P,v_i}\cap X)+u$.
Roughly speaking, every set of the kind $V_{P,v}\cap X$  is a
translated of some set $V_{P,v_i}\cap X$. Since  $R>R_1$, Lemma
\ref{cotavoronoi} ensures $N\leq c(L)$.

For every $1\leq j\leq N$, let $w_{j,1},\cdots, w_{j,m_j}$ be
return vectors of $V_{P,v_j}\cap X$, chosen in order that for
every return vector $w$ of $V_{P,v_j}\cap X$, there exists $1\leq
i\leq m_j$ such that $P_w$ is equal to $P_{w_{j,i}}$. Since
$R>R_1$, Lemma \ref{cotavoronoi3} implies that $m_j\leq c(n,L)$,
for every $1\leq j\leq N$. Therefore, the collection
$$
\F=\{P_{w_{j,l}}: 1\leq l\leq m_j,\,\, 1\leq j\leq N\}
$$
contains at most $c(L)c(n,L)$ elements.

Let $R'$ be the constant given by
$$
R'=(L^{n}-1)R-\varepsilon-4LR.
$$
The choice of $n$ ensures that $R'>0$.


For every $1\leq i\leq M(n,L)+1$, we define the following relation
on $\F$:
$$
\begin{array} {c}
P_{w_{j,l}} \R_i P_{w_{k,m}}\\
 \Updownarrow\\
\mbox{ for every } X', X''\in \Omega_{X} \mbox{ such that }\\ X'\cap B_{L^nR}(0)=P_{w_{j,l}} \mbox{ and } X''\cap B_{L^nR}(0)=P_{w_{k,m}},\\
\mbox{ there exist } v\in B_{2\varepsilon}(0) \mbox{ and } w\in
B_{4LR}(0) \mbox{ such that }\\
\pi_i(X')\cap B_{R'}(0)=(\pi_i(X'')+v+w)\cap B_{R'}(0).
\end{array}
$$

Since $L^nR-s_0\geq (L^n-1)R\geq R>R_{\varepsilon}$, from Lemma
\ref{semi-sliding-block-code} it follows this relation is
reflexive, so non empty. Since the cardinal of $\F$ is bounded by
$c(L)c(n,L)$, there are at most $M(n,L)$ different relations of
this kind. So, there exist $1\leq i<j<M(n,L)+1$ such that
$\R_i=\R_j$.

In the sequel, we will prove that $(\Omega_{X_i},\RR^d)$ and
$(\Omega_{X_j},\RR^d)$ are conjugate. For that, it is sufficient
to show  that if $Y, Z\in \Omega_{X}$ are such that
$\pi_i(Y)=\pi_i(Z)$ then $\pi_j(Y)=\pi_j(Z)$.

Let $Y$ and $Z$ be two Delone sets in $\Omega_{X}$ such that
$\pi_i(Y)=\pi_i(Z)$. Without lost of generality, we can suppose
that $0$ is an occurrence of  $P$ in $Y$ and in $Z-u_0$, where
$u_0$ is some point in $B_{4LR}(0)$. The  patches of $Y$ and $Z$
are translated of the patches of $X$. This implies there exist
$1\leq q_0, r_0\leq N$ such that
$$
Y\cap B_{L^nR}(0)=P_{w_{q_0,l_0}} \mbox{ and } (Z-u_0)\cap
B_{L^nR}(0)=P_{w_{r_0,k_0}},
$$
for some $1\leq l_0\leq m_{q_0}$ and $1\leq k_0\leq m_{r_0}$

\medskip

{\it \underline{Claim 1}:} $P_{w_{q_0,l_0}}\R_i P_{w_{r_0,k_0}}$.

\medskip

{\it Proof of Claim 1:}  Let $X'$ and $X''$ be two Delone sets in
$\Omega_{X}$ such that $X'\cap B_{L^nR}(0)=P_{w_{q_0,l_0}}$ and
$X''\cap B_{L^nR}(0)=P_{w_{r_0,l_0}}$. Since $R\geq s_0$, $R\geq
R_{\varepsilon}$ and
$$
X'\cap B_{L^nR}(0)=Y\cap B_{L^nR}(0), \,\, X''\cap
B_{L^nR}(0)=(Z-u_0)\cap B_{L^nR}(0),
$$
By the choice of $n$ and $R$, Lemma \ref{semi-sliding-block-code}
implies there exits $z_1$ and $z_2$ in $B_{\varepsilon}(0)$ such
that
\begin{align*}
(\pi_i(X')+z_1)\cap B_{(L^{n}-1)R}(0)& =  \pi_i(Y)\cap
B_{(L^{n}-1)R}(0), \mbox{ and }\\
(\pi_i(X'')+z_2)\cap B_{(L^{n}-1)R}(0) & = \pi_i(Z-u_0)\cap
B_{(L^{n}-1)R}(0).
\end{align*}
Then we get
\begin{align*}
&(\pi_i(X'')+z_2+u_0)\cap B_{(L^{n}-1)R-4LR}(0)\\
= &
\pi_i(Z)\cap B_{(L^{n}-1)R-4LR}(0)\\
  = & \pi_i(Y)\cap B_{(L^{n}-1)R-4LR}(0)\\
  = & (\pi_i(X')+z_1)\cap B_{(L^{n}-1)R-4LR}(0).
\end{align*}
Therefore
$$
(\pi_i(X'')+z_2+u_0-z_1)\cap B_{(L^{n}-1)R-4LR-\varepsilon}(0)=
\pi_i(X') \cap B_{(L^{n}-1)R-4LR-\varepsilon}(0),
$$
which implies that $P_{w_{q_0,l_0}}\R_i P_{w_{r_0,k_0}}$.

\medskip

Since $\R_i=\R_j$, from Claim 1 we get $P_{w_{q_0,l_0}}\R_j
P_{w_{r_0,k_0}}$.

Let $s$ be any other occurrence of $P$ in $Y$. Repeating the same
argument for $Y+s$ and $Z+s$, we deduce there exist $u_s\in
B_{4LR}(0)$ and $1\leq q_s, r_s\leq N$ such that
$$
(Y+s)\cap B_{L^nR}(0)=P_{w_{q_s,l_s}} \mbox{ and } (Z-u_s)\cap
B_{L^nR}(0)=P_{w_{r_s,k_s}},
$$
for some $1\leq l_s\leq m_{q_s}$ and $1\leq k_s\leq m_{r_s}$. Then
from Claim 1 we get $P_{w_{q_s,l_s}}\R_jP_{w_{r_s,k_s}}$. This
implies  there exist $t_s\in B_{2\varepsilon}(0)$ and $w_s\in
B_{4LR}(0)$ such that
$$
\pi_j(Y+s)\cap B_{R'}(0)=(\pi_j(Z+s-u_s)+t_s+w_s)\cap B_{R'}(0).
$$

\medskip

{\it \underline{Claim 2}:} The vector $w_s-u_s+t_s$ does not
depend on $s$, i.e, there exists  $y\in\RR^d$ such that
$w_s-u_s+t_s=y$ for every occurrence $s$ of $P$ in  $Y$.

\medskip

{\it Proof of Claim 2:} Let $s_1$ and $s_2$ be two occurrences of
$P$ in $Y$ such that the Vorono\"{\i} cells of $s_1$ and $s_2$,
with respect to set of occurrences of $P$ in $Y$,  have common
points in their borders. Since the diameter of these Vorono\"{\i}
cells is smaller or equal to $4RL$ (see remark \ref{important}),
we get $\|s_1-s_2\|\leq 8LR$. Then Then
\begin{eqnarray*}
& &(\pi_j(Z) +s_1+(s_2-s_1)-u_{s_1}+t_{s_1}+w_{s_1})\cap B_{R'-
8LR}(0)\\
&=& (\pi_j(Y)+s_1+(s_2-s_1))\cap B_{R'- 8LR}(0)\\
&=& (\pi_j(Z)+s_2-u_{s_2}+t_{s_2}+w_{s_2})\cap B_{R'- 8LR}(0).
\end{eqnarray*}
This implies
$(-u_{s_1}+t_{s_1}+w_{s_1})-(-u_{s_2}+t_{s_2}+w_{s_2})$ is a
return vector of a $(R'- 8LR)$-patch of $\pi_j(Z)+s_2$. Since
$$
R'-8LR=R(L^n-1-12L)-\varepsilon\geq R-\varepsilon>M,
$$
Lemma \ref{constantequeminora} implies the non zero vectors of the
$(R'-8LR)$-patches of $\pi_j(Z)+s_2$ have norm greater or equal to
$(R'-8LR)/11L$. Thus, due to
$$
\|-u_{s_1}+t_{s_1}+w_{s_1}-(-u_{s_2}+t_{s_2}+w_{s_2})\|\leq
16LR+4\varepsilon,
$$
and
\begin{eqnarray*}
11(16LR+4\varepsilon) &=& 176L^2R+44L\varepsilon\\
                      &< &(L^n-1-12L-1)R+44L\varepsilon\\
                      & = & R'-8LR+\varepsilon-R+44L\varepsilon\\
                      & < & R'-8LR+L-R+44L<R'-8LR,
\end{eqnarray*}
we deduce  $-u_{s_1}+t_{s_1}+w_{s_1}=-u_{s_2}+t_{s_2}+w_{s_2}$,
which  shows Claim 2.

\medskip

From Claim 2 we get there exists $y\in \RR^d$ such that for every
occurrence $s$ of $P$ in $Y$,
\begin{align*}
\pi_j(Y+s)\cap B_{R'}(0) & =(\pi_j(Z+s)+y)\cap B_{R'}(0), \mbox{
and then }\\
\pi_j(Y)\cap B_{R'}(s) & =(\pi_j(Z)+y)\cap B_{R'}(s).
\end{align*}
From Remark \ref{important}, the diameter of the Vorono\"{\i}
cells of $P$ is less than $4LR$, which is less than $R'$. Hence,
$$
\pi_j(Y)=\pi_j(Z)+y.
$$
We conclude with Lemma \ref{finite-to-one0} and Proposition
\ref{igualdad}.
\end{proof}

\subsection*{Acknowledgments}
MIC is grateful for the grant  FONDECYT de Iniciaci{\'o}n
11060002. She also thanks the Laboratoire Ami\'enois de
Ma\-th\'e\-ma\-ti\-ques Fondamentales et Appliqu\'ees, CNRS-UMR
6140, Universit\'{e} de Picardie Jules Verne, where part of this
work was done. FD would like to thank the hospitality of
Departamento de Matem\'atica y Ciencia de la Computaci\'on de la
Universidad de Santiago de Chile. MIC and FD were partially
supported by Nucleus Millenius P04-069-F. The three authors thank
the financial support of program CNRS/CONICYT 2008 N$^o$ 21202.

\end{document}